\newcommand{\be}{\begin{equation}}
\newcommand{\ee}{\end{equation}}
\newcommand{\ba}{\begin{array}}
\newcommand{\ea}{\end{array}}
\newcommand{\bea}{\begin{eqnarray*}}
\newcommand{\eea}{\end{eqnarray*}}
\newcommand{\bean}{\begin{eqnarray}}
\newcommand{\eean}{\end{eqnarray}}
\def\ds{\displaystyle}
\newtheorem{lemma}{Lemma}[section]
\newtheorem{remark}{Remark}[section]
\newtheorem{theorem}{Theorem}[section]
\newtheorem{corollary}{Corollary}[section]
\newtheorem{proposition}{Proposition}[section]
\newcommand{\R}{\mathbb{R}}
\def\Box{\leavevmode\vbox{\hrule
     \hbox{\vrule\kern5pt\vbox{\kern5pt}%
           \vrule}\hrule}}
\renewcommand{\square}{\hfill$\Box$}
\begin{document}
\title{Determination of small linear perturbations in the diffusion coefficient from partial dynamic boundary measurements}
 %  On the determination of the electromagnetic coefficient from partial boundary measurements in a full Maxwell equations
% Identification of small inhomogeneities of extreme conductivity by boundary measurements:
%Identification of small anisotropic imperfection for the heat equation from partial dynamic boundary measurements.

\author{Aymen Jbalia\thanks{Department of Mathematics, Faculty of Sciences, 7021 Zarzouna, Bizerte, Tunisia.
(Email: jbalia.aymen@yahoo.fr)} \and \ Abdessatar Khelifi \thanks{ Faculty of
Sciences of Bizerte, University of Carthage
(Email: abdessatar.khelifi@fsb.rnu.tn) }}

%\author{Abdessatar Khelifi
%\thanks{ Department of Mathematics, Faculty of Sciences Bizerte, University of Carthage, Tunisia.
%(Email:abdessatar.khelifi@fsb.rnu.tn).}\and Aymen Jbalia%\thanks{Department of Mathematics, Faculty of Sciences, University of Carthage,
 %Tunisia.(jbalia.aymen@yahoo.fr) }
%}

%\author{Abdessatar Khelifi \thanks{
%D\'epartement de Math\'ematiques, Universit\'e des Sciences de
 %Bizerte, Tunisia. (Email: abdessatar.khelifi@fsb.rnu.tn)}, Aymen Jbalia \thanks{Department of Mathematics, Faculty of Sciences,
%University of Carthage,

 %\homepage{http://www.Second.institution.edu/~Charlie.Author.}
%\affiliation{%
%D\'epartement de Math\'ematiques, Universit\'e des Sciences de
%Carthage, Bizerte, Tunisia.%\\This line break forced% with \\
%}%
%\date{}
\maketitle \abstract{This work deals with an inverse boundary value problem arising
from the equation of heat conduction. We reconstruct small
perturbations of the (isotropic) heat conductivity distribution from partial (on accessible part of the boundary) dynamic boundary
measurements and for finite interval in
time.
%The method is based on derived asymptotic inverse Fourier transform of the perturbation in the heat conductivity.
By constructing of appropriate test functions, using a control method, we provide a rigorous derivation of the
inverse Fourier transform of the perturbations in the diffusion coefficient as the leading order of an appropriate averaging of
the partial dynamic boundary measurements.}\\

\noindent {\bf Key words}: Heat equation, inverse problem, heat conductivity, reconstruction, exact boundary
controllability\\

\noindent {\footnotesize {\bf 2010 Mathematics Subject
Classifications}: 35K05, 35R30, 80A23, 35B40.

\section{Introduction}
%For non constant-coefficient heat equations.\\
This paper is devoted to the identification of small amplitude perturbations, in the smooth diffusion coefficient for the heat equation, using partial boundary measurements.\\
The inverse heat conduction problem arises in most thermal manufacturing processes
of solids and has recently attracted much attention. In this inverse problem,
for the heat equation, one is requested to reconstruct a heat conductivity distribution by measuring on an accessible part of the boundary. Indeed, we exhibit appropriate boundary measurements by using exact boundary control data to reconstruct $\emph{\textbf{c}}_\alpha(x)$ approximately, provided it deviates only slightly from known constant $\emph{\textbf{c}}_0$.\\
Notice that reconstruction methods that allow partial boundary data are very interesting because, in most experimental
settings, one does not have access to measurements on the whole boundary.\\
The problems to be discussed in this article generalize the approaches elaborated by Somersalo, Isaacson and Cheney in \cite{SIC}, Ammari in \cite{Ammari}, Darbas and Lorhengel in \cite{Darbas1} from reconstructions of electromagnetic parameters to an inverse problem of reconstructing an unknown coefficient in a parabolic equation.\\
Following the approaches used in \cite{Ammari,Darbas1}, our reconstruction method based on the knowledge of the boundary measurements requires the resolution of an exact boundary controllability problem by using the Hilbert Uniqueness Method (HUM) \cite{Lions}. Unlike electromagnetic waves studied in the above references, the exact boundary control problem for the heat equation is ill-posed in general. Thanks to Carthel, Glowinski and Lions \cite{Glowinski1}, this ill-posedness is surmounted by using specific regularization procedures.\\
To the best of our knowledge, the present paper is the first attempt to design an effective method to determine a coefficient valued-function which quantifies the perturbations of the thermal conductivity with respect to the homogeneous background medium, and satisfies some specific conditions.\\
%%%%%%%%%%%%%%%%%%%%%%%%%%%
For the stationary case, the inverse conductivity problem has been studied by several authors through different approaches. Nachman \cite{Nachman} proved an uniqueness result for the
diffusion coefficient $\ds c\in \mathcal{C}^2(\bar{\Omega})$ and Astala, P$\ddot{a}$iv$\ddot{a}$rinta \cite{Astala} for $\ds c\in L^{\infty}(\Omega)$ with measurements on the whole boundary in $\mathbb{R}^2$. To estimate the Robin coefficient in a stationary diffusion equation, Zou and Jin \cite{Zou2} developed a suitable finite-element method by considering boundary measurements of the solution and the heat flux. Using complex exponentially solutions, Calderon \cite{Calderon}, and Sylvester and Uhlmann \cite{SU} showed uniqueness for the diffusion coefficient in $\mathbb{R}^3$. Yamamoto \cite{Yamamoto1} realized Lipschitz stability results for parabolic equations. But by closely related approaches, Benabdallah, Gaitan and Le Rousseau \cite{Gaitan1} proved a Lipschitz stability result for the determination of a piecewise-constant diffusion coefficient.\\
%%%%%%%%%%%%%%%%
The paper is organized as follows. In Section 2, we
formulate our main problem and we introduce the perturbed problem of heat conduction. Moreover, we describe briefly our inverse problem treated
in this connection. In Section 3, we prove rigourously some energy estimates, associated to the temperature distribution, which will be useful for our future results. Section 4 is devoted to
the reconstruction method in order to recover the small perturbations in the heat conductivity distribution which are quantified by the function $\emph{\textbf{c}}(x)$. The reconstruction theorem, completely proved in this section, is deeply based on appropriate averaging using particular background solutions constructed by an exact control method related to parabolic equations. Finally, in Section 5, we conclude our work and we suggest
that our methods can be useful, in a forthcoming investigation, to
identify diffusion coefficient in an anisotropic and/or in a non-cartesian medium.
%%%%%%%%%%%%%%%%%%%%%%%%%%%%%%%%%%%%%%%%%%%%%%%%%%%%%%%%%%%%%%%%%%%%%%%%%%%
\section{Presentation of the Inverse Problem} Let $\Omega\subset\R^d$, $d=2,3$ be a bounded, smooth domain with boundary $\partial \Omega$ of class ${\cal C}^{2}$.
By $\nu = \nu(x)$ we denote the outward unit normal vector to
$\Omega$ at a point $x\in\partial \Omega$, and we set $\displaystyle \partial_\nu u =\frac{\partial u}{\partial \nu}=\nabla u\cdot \nu$ and $\partial_t=
\ds\frac{\partial}{\partial t}$. Let $\Omega^\prime$ be a smooth subdomain of
$\Omega$ and is isotropic, \emph{i.e.} its thermal conductivity is the same in all directions. Let $\Gamma \subset \subset
\partial \Omega$ denote a measurable smooth connected part of the boundary
$\partial \Omega$. $\Gamma$ may be the \emph{accessible} part of $\partial\Omega$, on which we can make our measurements.\\

We suppose that $\Omega$ is occupied by a material of a positive thermic conductivity
\begin{equation}\label{c-perturbation}
\emph{\textbf{c}}_\alpha(x) = \emph{\textbf{c}}_0 + \alpha \emph{\textbf{c}}(x), x \in \Omega.
\end{equation}
%and magnetic permeability $\mu\equiv 1$ in $\Omega$.
The positivity of the body's thermal coefficient is required on both physical and
mathematical grounds. We assume that
\[\emph{\textbf{c}}(x) \in {\cal C}^1(\overline{\Omega})\cap{\cal C}^2(\Omega),
\emph{\textbf{c}} \equiv 0 {\rm \;in\;} \Omega \setminus
\overline{\Omega^\prime}.\] We also assume that $\alpha
>0$, the order of magnitude of the small perturbations, is sufficiently small
 that \be \label{ac} |\emph{\textbf{c}}_\alpha(x)|
\geq \kappa > 0, x \in \overline{\Omega},\quad \mbox{for }0<\alpha<\alpha_0 \ee where $\kappa$ and $\alpha_0$ are positive constants.\\

Let $u(x,t)$ be the solution of the initial boundary value problem for the heat equation in the absence of perturbations ($\alpha=0$):
\begin{equation}\label{bvp}
\left\{
  \begin{array}{ll}
    \partial_tu-\emph{\textbf{c}}_0\Delta u=0, & (x,t)\in \Omega\times[0,T] \\
  u(x,0)=\varphi(x), & x\in\Omega \\
   u(x,t)|_{\partial\Omega\times[0,T]}=f(x,t),
  \end{array}
\right.
\end{equation}
where the regular data $\varphi$ and $f$ are known.\\
%, \\  \frac{\partial u}{\partial\nu}(x,0)=\psi(x), & x\in\partial\Omega \\
Physically, we consider a heat-conducting body modeled by the set $\overline{\Omega}$
 and the
strictly positive heat conductivity distribution $\emph{\textbf{c}}_0$ inside the body. Here $T > 0$ is a given final time. The function
$\varphi$ is the initial temperature distribution in $\Omega$
 over which we do not have control.\\
%%%%%%%%%%%%%%%%%%%%%%%%%%%%%%%%%%%%%%%%%%%
Let $u_\alpha(x,t)$ denote the solution of the initial boundary value problem for the heat equation in the presence of the linear perturbations (\ref{c-perturbation}):
\begin{equation}\label{bvp-alpha}
\left\{
  \begin{array}{ll}
    \partial_tu_\alpha-(\nabla\cdot \emph{\textbf{c}}_\alpha \nabla)u_\alpha=0, & (x,t)\in \Omega\times[0,T] \\
  u_\alpha(x,0)=\varphi(x), & x\in\Omega \\
   u_\alpha(x,t)|_{\partial\Omega\times[0,T]}=f(x,t).\\
  \end{array}
\right.
\end{equation}
We perform boundary measurements by applying the temperature $f(x,t)$ at the
boundary $\partial\Omega$ during the time $0 < t < T$ and measuring the resulting heat flux $\displaystyle\frac{\partial u_\alpha}{\partial\nu}|_{\partial\Omega}$
 through the boundary; where $u_\alpha$ is the solution of (\ref{bvp-alpha}).\\

Throughout this paper, we shall use quite standard $L^2$-based Sobolev spaces to measure regularity. The notation $ H^s$ is used to denote
 those functions who along with all their derivatives of order less than and equal to s are in $L^2$.  $H^1_0$ denotes the closure of ${\cal C}^\infty_0$ in the norm
of $H^1$. Sobolev spaces with negative indices are in general defined by duality,
using $L^2$-inner product. %We shall only need one such space, namely $H^{-1}$,which is defined as the dual of $H^1_0$. 
It is classical to prove that the perturbed problem for the heat equation (\ref{bvp-alpha}) has a unique weak
solution $u_\alpha\in H^{2,1}(\Omega\times[0,T])$ (see for example, \cite{Hsiao,LM}), where the anisotropic Sobolev space $H^{2,1}(\Omega\times[0,T])$ is defined by
$$H^{2,1}(\Omega\times[0,T]):=L^2\big([0,T];H^2(\Omega)\big)\cap H^1\big([0,T];L^2(\Omega)\big).
$$
More details and comments about general anisotropic Sobolev space $H^{r,s}(\Omega\times[0,T])$ (for $r\geq0$, and $s\geq0$) can be found in the well-known works of Lions and Magenes in \cite{LM}.\\

In this article, we propose to solve the following inverse problem:\\
\textbf{Inverse Problem.} {\it Given a time $T > 0$, boundary data $f$ and initial data $\varphi$,
reconstruct the function $\emph{\textbf{c}}(x)$ for $x\in\Omega'$, defined by (\ref{c-perturbation})-(\ref{ac}), from only knowledge of boundary
measurements of $
\displaystyle \partial_\nu u_\alpha \quad {\rm on\;} \Gamma \times (0, T),$ i.e., on the (accessible) part $\Gamma$ of the boundary $\partial \Omega$ and
on the finite interval in time $(0, T)$ and where $ u_\alpha$ is the solution to problem (\ref{bvp-alpha}).}\\
For this purpose, we develop an asymptotic method based on appropriate averaging
using particular background solutions as weights. These particular solutions are
constructed by a control method as it has been done in the original work
\cite{Yamamoto1} concerning the inverse source hyperbolic problem, and also we can refer to \cite{Ammari,Ammari1} for the case of electromagnetic problem.
%%%%%%%%%%%%%%%%%%%%%
 %The technics used to prove our main results are different from that involved in \cite{ammari-heat1,chapko,gaitan1,ikehata2}.
 By means of specific test functions our main result can be read as an approximation to the Fourier transformation which may be suggested as an idea for a numerical reconstruction algorithm.\\
%The general approach we will take to recuperate the locations and shapes of the imperfections is to integrate the solution against special test functions.
The above inverse boundary value problem is related to nondestructive testing
where one looks for anomalous materials inside a known material. %One such example
%is monitoring a blast furnace used in ironmaking: the corroded thickness of the
%accreted refractory wall based on temperature and heat flux measurement on the
%accessible part of the furnace wall \cite{wei-li}.
A similar approach may be applied to the "perturbed" full
(time-dependent) Stokes equations with nonconstant parameters.
 This may be discussed in a
forthcoming work.\\ %The elastodynamic inverse problem will also be considered.\\
The underlined inverse problem differs considerably from that considered by Ammari et al. in \cite{Ammari3} where the authors determined
an internal thermal conductivity of a given object. Moreover, our inverse problem differs from that evoked by Zou and Engl in the well-known work \cite{Zou1} where the authors presented a new approach, by using Tikhonov regularization, in order to identify the conductivity distribution in a heat conduction system.\\

There are lots of works on inverse problem of heat conductivity, see \cite{Jbalia1,Gaitan1,bryan,Gaitan2,ikehata1,ikehata2,jia} and the references therein.\\
%%%%%%%%%
Generally, the determination of conductivity profiles from knowledge
of boundary measurements has received a great deal of attention
(see for example, the important works elaborated by Ammari et al. \cite{Ammari2,Ammari3,Ammari4,Ammari5}, and those of Vogelius et al. in \cite{FV,VV}). The reconstruction of "perturbed" thermal conductivity within dynamics is much less investigated. For discussions
 on other interesting inverse hyperbolic problems, the reader is referred
for example to
Isakov \cite{I}, Puel and Yamamoto \cite{PY},
Bruckner and Yamamoto \cite{BY}.\\

%%%%%%%%%%%%%%%%%%%%%%%%%%%%%%%%%%%%%%%%%%%%%%%%%%%%%%%%%%%%%%%%%%%%%%%%%%%%
\section{An Energy Estimate}
In this section we may estimate the difference, between the solution $u_\alpha$ of the perturbed heat equation and the exact background solution $u$, with respect to the order of magnitude of the small perturbations $\alpha$. To do it, we can define the function $v_\alpha\in C^1\big([0,T];L^2(\Omega)\big)\cap L^2\big(0,T;H_0^1(\Omega)\big) $ to be the solution of:
\begin{equation}\label{eq-v-alpha}
\left\{
  \begin{array}{ll}
    \nabla\cdot\ \emph{\textbf{c}}_\alpha\nabla v_\alpha=u_\alpha-u; & \hbox{in $\Omega\times[0,T]$,} \\
     v_\alpha=0; & \hbox{in $\partial\Omega\times[0,T]$,} \\
    v_\alpha(x,0)=0; & x\in\Omega.
  \end{array}
\right.
\end{equation}

The existence and uniqueness of $v_\alpha$ is most established by variational means.\\
 %Elliptic regularity estimates then guarantee that it is indeed a classical solution.\\

The following energy estimate of $u_\alpha-u$ holds.
\begin{proposition}\label{prop1}Suppose that we have all hypothesis (\ref{c-perturbation})-(\ref{ac}). Assume that $\emph{\textbf{c}}_0$ is a constant function in $\Omega'$. Let $d=2$ and $\alpha_0$ be the constant given by (\ref{ac}). Then, there exist $C>0$ such that, for $0<\alpha<\alpha_0$ the
following energy estimate holds:$$\|\nabla(u_\alpha-u)\|_{L^\infty(0,T;L^{2}(\Omega))}+\|u_\alpha-u\|_{L^\infty(0,T;L^2(\Omega))}\leq C\alpha.$$
The constants
$C$ dependent on the domain $\Omega$, $\kappa$, $\alpha_0$, $T$, the
data $\varphi$ and $f$, but are otherwise independent of
$\alpha$.
\end{proposition}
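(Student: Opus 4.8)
The plan is to derive the estimate by writing an equation for the difference $w_\alpha := u_\alpha - u$, testing it against suitable multipliers, and using $v_\alpha$ from \eqref{eq-v-alpha} to handle the one awkward term. First I would subtract \eqref{bvp} from \eqref{bvp-alpha}: since $u_\alpha$ and $u$ share the same initial data $\varphi$ and the same boundary data $f$, the difference $w_\alpha$ satisfies a homogeneous initial-boundary value problem,
\[
\partial_t w_\alpha - \nabla\cdot(\emph{\textbf{c}}_\alpha \nabla w_\alpha) = \nabla\cdot\big((\emph{\textbf{c}}_\alpha-\emph{\textbf{c}}_0)\nabla u\big) = \alpha\,\nabla\cdot(\emph{\textbf{c}}\,\nabla u),
\]
with $w_\alpha(\cdot,0)=0$ and $w_\alpha|_{\partial\Omega\times[0,T]}=0$. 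Note the right-hand side is supported in $\overline{\Omega'}$ and carries the explicit factor $\alpha$. The issue is that $\nabla\cdot(\emph{\textbf{c}}\,\nabla u)$ is only an $L^2$-in-space quantity (it costs two derivatives on $u$), so a naive energy estimate testing with $w_\alpha$ gives control of $\|w_\alpha\|_{L^\infty(L^2)}$ and $\|\nabla w_\alpha\|_{L^2(L^2)}$ but not obviously the $L^\infty(L^2)$ norm of the gradient claimed in the statement.

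The key device is to integrate the source by parts against $v_\alpha$. Testing the $w_\alpha$-equation with $v_\alpha$ and integrating over $\Omega$, the term $\int_\Omega \big(\nabla\cdot \emph{\textbf{c}}_\alpha\nabla v_\alpha\big)\,\partial_t\! \big(\text{something}\big)$ can be rearranged: by \eqref{eq-v-alpha} we have $\nabla\cdot(\emph{\textbf{c}}_\alpha\nabla v_\alpha)=w_\alpha$, so the natural energy functional is $\tfrac12\int_\Omega \emph{\textbf{c}}_\alpha|\nabla v_\alpha|^2$, whose time derivative is $\int_\Omega \emph{\textbf{c}}_\alpha\nabla v_\alpha\cdot\nabla\partial_t v_\alpha = -\int_\Omega w_\alpha\,\partial_t v_\alpha$ (since $\emph{\textbf{c}}_0$, hence $\emph{\textbf{c}}_\alpha$, is constant on $\Omega'$ where everything lives, or more generally after an integration by parts using $v_\alpha=0$ on the boundary). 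Replacing $\partial_t v_\alpha$ via the differentiated equation $\nabla\cdot(\emph{\textbf{c}}_\alpha\nabla\partial_t v_\alpha)=\partial_t w_\alpha = \nabla\cdot(\emph{\textbf{c}}_\alpha\nabla w_\alpha)+\alpha\nabla\cdot(\emph{\textbf{c}}\nabla u)$ and pairing against $v_\alpha$ converts the problematic source term into $\alpha\int_{\Omega'}\emph{\textbf{c}}\,\nabla u\cdot\nabla v_\alpha$, which is harmless because it only costs \emph{one} derivative on $u$ and one on $v_\alpha$. This is the standard ``$H^{-1}$ energy method'' for parabolic equations with rough right-hand sides. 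Carrying it out yields a differential inequality of the form $\tfrac{d}{dt}\big(\|\nabla v_\alpha\|_{\emph{\textbf{c}}_\alpha}^2 + \|w_\alpha\|_{L^2}^2\big)\le C\big(\|\nabla v_\alpha\|_{L^2}^2 + \|w_\alpha\|_{L^2}^2\big) + C\alpha^2\|u\|_{H^{2,1}}^2$, and Grönwall's lemma together with $w_\alpha(\cdot,0)=0$, $v_\alpha(\cdot,0)=0$ gives $\|w_\alpha\|_{L^\infty(0,T;L^2(\Omega))}\le C\alpha$.

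For the gradient estimate I would then test the $w_\alpha$-equation directly with $\partial_t w_\alpha$ (legitimate since $u_\alpha\in H^{2,1}$, so $w_\alpha\in H^{2,1}$ as well), obtaining
\[
\|\partial_t w_\alpha\|_{L^2}^2 + \frac12\frac{d}{dt}\!\int_\Omega \emph{\textbf{c}}_\alpha|\nabla w_\alpha|^2 = \alpha\!\int_{\Omega'}\! \nabla\cdot(\emph{\textbf{c}}\nabla u)\,\partial_t w_\alpha \le \frac12\|\partial_t w_\alpha\|_{L^2}^2 + \frac{\alpha^2}{2}\|\nabla\cdot(\emph{\textbf{c}}\nabla u)\|_{L^2}^2.
\]
Here the ellipticity bound \eqref{ac} ($\emph{\textbf{c}}_\alpha\ge\kappa>0$) makes $\int_\Omega \emph{\textbf{c}}_\alpha|\nabla w_\alpha|^2$ equivalent to $\|\nabla w_\alpha\|_{L^2}^2$; integrating in time and using $\nabla w_\alpha(\cdot,0)=0$ bounds $\|\nabla w_\alpha\|_{L^\infty(0,T;L^2(\Omega))}\le C\alpha$ provided $\|\nabla\cdot(\emph{\textbf{c}}\nabla u)\|_{L^2(0,T;L^2(\Omega))}<\infty$, which holds because $\emph{\textbf{c}}\in\mathcal{C}^1(\overline\Omega)\cap\mathcal{C}^2(\Omega)$ and the unperturbed solution $u$ of \eqref{bvp} is smooth in the interior (in particular $u\in L^2(0,T;H^2)$) by parabolic regularity for the constant-coefficient problem with regular data $\varphi,f$. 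Summing the two bounds gives the claim, with $C$ depending only on $\Omega,\kappa,\alpha_0,T,\varphi,f$.

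The main obstacle I anticipate is the gradient-in-$L^\infty(L^2)$ bound: one must be sure that the right-hand side $\nabla\cdot(\emph{\textbf{c}}\nabla u)$ genuinely lies in $L^2(0,T;L^2(\Omega))$ with a norm controlled by the data, and that the test function $\partial_t w_\alpha$ is admissible. Both rely on the interior $\mathcal{C}^2$-regularity of $\emph{\textbf{c}}$ (so that no boundary terms from $\partial\Omega'$ appear — the cutoff is automatic since $\emph{\textbf{c}}\equiv0$ outside $\overline{\Omega'}$) and on the $H^{2,1}$ regularity of $u$ and $u_\alpha$ quoted in Section 2. The restriction $d=2$ presumably enters only through a Sobolev embedding used to absorb a lower-order product term (e.g. controlling $\|\emph{\textbf{c}}\nabla u\|$ in terms of $\|u\|_{H^2}$ via $H^1\hookrightarrow L^p$), and I would flag exactly where dimension two is used rather than claiming the argument is dimension-free.
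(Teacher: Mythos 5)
Your proposal is correct in substance but diverges from the paper's argument precisely at the key point, the $L^\infty(0,T;L^2)$ bound for $\nabla(u_\alpha-u)$. The paper never tests with $\partial_t w_\alpha$: it first tests the difference identity with the auxiliary potential $v_\alpha$ of (\ref{eq-v-alpha}) and applies Gronwall to get $\|\nabla v_\alpha(\cdot,t)\|_{L^2(\Omega)}\le C\alpha$, and then tests a second time with $v=w_\alpha:=u_\alpha-u$ itself, rewriting the source term $-\alpha\int_\Omega c\nabla u\cdot\nabla w_\alpha$ by integrating by parts through the identity $w_\alpha=\nabla\cdot(c_\alpha\nabla v_\alpha)$ so that it is bounded by $C\alpha\|\nabla v_\alpha\|_{L^2(\Omega)}\le C\alpha^2$, and reads off both claimed norms from the resulting inequality. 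You replace this second step by the classical improved parabolic energy estimate (testing with $\partial_t w_\alpha$), which yields $\|\nabla w_\alpha\|_{L^\infty(0,T;L^2(\Omega))}\le C\alpha$ directly from $\nabla\cdot(c\nabla u)\in L^2(0,T;L^2(\Omega))$; Poincar\'e (which you could invoke explicitly, since $w_\alpha(\cdot,t)\in H^1_0(\Omega)$) then gives the $L^2$ bound on $w_\alpha$, making your $v_\alpha$-based first step logically redundant. What each route buys: the paper stays with $H^1_0$ test functions but needs the extra smoothness ($c\in{\cal C}^2$, constant $c_0$, regular background $u$) to justify the repeated integration by parts, and its final separation of the two left-hand terms at each time is somewhat delicate because the time-derivative term has no sign; your route needs the admissibility of $\partial_t w_\alpha$ as a test function, which is not immediate from $u_\alpha\in H^{2,1}$ alone and should be justified by a Galerkin or time-mollification argument — you flag this, and since the coefficients are time-independent and $w_\alpha(\cdot,0)=0\in H^1_0(\Omega)$ the standard theorem applies. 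Two minor points: the differential inequality you state in the $v_\alpha$ step does not by itself give $\|w_\alpha\|_{L^\infty(0,T;L^2(\Omega))}\le C\alpha$ (that test yields $\|\nabla v_\alpha\|_{L^\infty(0,T;L^2(\Omega))}\le C\alpha$ and only an $L^2(0,T;L^2(\Omega))$ bound on $w_\alpha$; the paper needs its second test to upgrade it), but this gap is harmless in your scheme since the $\partial_t w_\alpha$ estimate plus Poincar\'e covers it; and, as you suspected, $d=2$ plays no genuine role in either argument — no dimension-dependent embedding is actually used.
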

\begin{proof}Let $v:(x,t)\in\Omega\times [0,T]\mapsto v(x,t)\in \mathbb{R}$, be any function such that $v\in H_0^1(\Omega)$ and simplified $v\equiv v(\cdot,t) : \Omega\to \mathbb{R }$. The main achievement of the proof is the following equality
\begin{equation}\label{energy1}\int_\Omega\partial_t(u_\alpha-u)v+\int_\Omega \emph{\textbf{\textbf{c}}}_\alpha\nabla(u_\alpha-u).\nabla v=\int_{\Omega}(\emph{\textbf{c}}_0-\emph{\textbf{c}}_{\alpha})\nabla u.\nabla v.\end{equation}
%with $v\in H_0^1(\Omega)$. Notice that, here  for a function $v:(x,t)\in\Omega\times [0,T]\to v(x,t)\in \mathbb{R}$, we write $v\equiv v(\cdot,t) : \Omega\to \mathbb{R }$.\\
To do this, one can remark that $u_\alpha(\cdot,t)-u(\cdot,t)\in H_0^1(\Omega)$, and by applying Green's formula, we obtain$$\int_\Omega\partial_t(u_\alpha-u)v=\int_\Omega(\nabla\cdot(\emph{\textbf{c}}_\alpha\nabla u_\alpha)-\emph{\textbf{c}}_0\Delta u)v=
-\int_\Omega \emph{\textbf{c}}_\alpha\nabla u_\alpha.\nabla v
+\int_{\partial\Omega} \emph{\textbf{c}}_\alpha\partial_\nu u_\alpha v$$
$$+\int_\Omega \emph{\textbf{c}}_0\nabla u.\nabla v-\int_{\partial\Omega} \emph{\textbf{c}}_0\partial_\nu u v.$$
Note that $v\in H_0^1(\Omega)$, then we get$$\int_\Omega\partial_t(u_\alpha-u)v=-\int_\Omega \emph{\textbf{c}}_\alpha\nabla u_\alpha.\nabla v+\int_\Omega \emph{\textbf{c}}_0\nabla u.\nabla v.$$
Hence,\begin{equation}\label{integg1}
\int_\Omega\partial_t(u_\alpha-u)v=-\int_\Omega \emph{\textbf{c}}_\alpha\nabla(u_\alpha-u).\nabla v+\int_\Omega(\emph{\textbf{c}}_0-\emph{\textbf{c}}_\alpha)\nabla u.\nabla v,
\end{equation}
which is the desired relation (\ref{energy1}).\\
%\Omega\backslash\overline{\mathcal{B}_\alpha }
%Using the fact that the inhomogeneities are far from the boundary, and relation $$\displaystyle \int_\Omega(c_0-c_\alpha)\nabla u.\nabla v=\int_{\Omega\backslash\overline{\mathcal{B}_\alpha } }(c_0-c_\alpha)\nabla u.\nabla v+\int_{\mathcal{B}_\alpha }(c_0-c_\alpha)\nabla u.\nabla v=0+\sum_{j=1}^m(c_0-c_j)\int_{z_j+\alpha B_j}\nabla u.\nabla v.$$  Then, relation (\ref{integg1}) becomes:
%\begin{equation}\label{integg1-1}
%\int_\Omega\partial_t(u_\alpha-u)v+\int_\Omega c_\alpha\nabla(u_\alpha-u).\nabla v=\int_\Omega(c_0-c_\alpha)\nabla u.\nabla v
%\end{equation}
%$$=\sum_{j=1}^m(c_0-c_j)\int_{z_j+\alpha B_j}\nabla u.\nabla v.
%$$
On the other hand, let $v_\alpha$ be the solution of (\ref{eq-v-alpha}). Then, we can write
$$\int_\Omega \emph{\textbf{c}}_\alpha\nabla(u_\alpha-u).\nabla v_\alpha=-\int_\Omega (\nabla\cdot\ \emph{\textbf{c}}_\alpha\nabla v_\alpha)(u_\alpha-u)=-\int_\Omega |u_\alpha-u|^2\leq0.$$
Next,%(\nabla\cdot\ \emph{\textbf{c}}_\alpha\nabla v_\alpha)
$$
\int_\Omega\partial_t(u_\alpha-u)v_\alpha=\int_\Omega \partial_t(\nabla\cdot\ \emph{\textbf{c}}_\alpha\nabla v_\alpha)v_\alpha=-\int_\Omega \emph{\textbf{c}}_\alpha\partial_t\nabla v_\alpha.\nabla v_\alpha=-\frac{1}{2}\partial_t\int_\Omega \emph{\textbf{c}}_\alpha|\nabla v_\alpha|^2.
$$
Adding both sides, we obtain from (\ref{energy1}) with $v$ replaced by $v_\alpha$:
$$\partial_t\int_\Omega \emph{\textbf{c}}_\alpha|\nabla v_\alpha|^2+2\int_\Omega |u_\alpha-u|^2=-2\int_{\Omega}(\emph{\textbf{c}}_0-\emph{\textbf{c}}_\alpha)\nabla u.\nabla v_\alpha.$$
 This implies, by considering (\ref{c-perturbation}), that
 \be\label{eqqq}\partial_t\int_\Omega \emph{\textbf{c}}_\alpha|\nabla v_\alpha|^2\leq2|\int_{\Omega}(\emph{\textbf{c}}_0-\emph{\textbf{c}}_\alpha)\nabla u.\nabla v_\alpha|\leq C\alpha\|\emph{\textbf{c}}\|_{L^{\infty}(\Omega')}.\|\nabla u(.,t)\|_{L^2(\Omega)}\|\nabla v_\alpha(.,t)\|_{L^2(\Omega)}.\ee
 On the other hand, using (\ref{c-perturbation})-(\ref{ac}), one can find constants $M_0>0$, and $M_1>0$ such that
 \be\label{eqqq-1}\displaystyle\sup_{x\in \Omega}|\emph{\textbf{c}}(x)|= M_0,\quad \mbox{and }\kappa\leq \sup_{x\in \Omega}|\emph{\textbf{c}}_\alpha(x)|\leq M_1:=\emph{\emph{\textbf{c}}}_0+\alpha_0 M_0,\quad \mbox{for }0<\alpha<\alpha_0.\ee
%Since,
%Since,
%$$\|\nabla u(.,t)\|_{L^2(\mathcal{B}_\alpha)}\leq\|\nabla u(.,t)\|_{L^\infty(\mathcal{B}_\alpha)}|\mathcal{B}_\alpha|^{1/2}\leq C\alpha^{\frac{d}{2}}.$$
Then, (\ref{eqqq}) becomes
$$\partial_t\int_\Omega \emph{\textbf{c}}_\alpha|\nabla v_\alpha|^2\leq C \alpha\|\nabla v_\alpha(.,t)\|_{L^2(\Omega)}.
$$
According to Gronwall lemma, we get
\be\label{eqqq-2}\|\nabla v_\alpha(.,t)\|_{L^2(\Omega)}\leq C\alpha.
\ee
On the other hand, replacing $v$ by $u_\alpha-u$ in equality $(\ref{energy1})$ and taking into account (\ref{eq-v-alpha}) and (\ref{eqqq-1})-(\ref{eqqq-2}), we immediately obtain $$\frac{1}{2}\partial_t\int_\Omega|u_\alpha-u|^2+\int_\Omega \emph{\textbf{c}}_\alpha|\nabla(u_\alpha-u)|^2=\int_{\Omega}(\emph{\textbf{c}}_0-\emph{\textbf{c}}_{\alpha})\nabla u.\nabla(u_\alpha-u)$$
$$=-\alpha\int_{\Omega}\emph{\textbf{c}}\nabla u.\nabla(u_\alpha-u)$$
$$=\alpha\int_{\Omega}[\emph{\textbf{c}}\Delta u+\nabla(\emph{\textbf{c}})\cdot\nabla u](\nabla. \ \emph{\textbf{c}}_\alpha\nabla v_\alpha)$$
$$\leq C\alpha\|\nabla v_\alpha(.,t)\|_{L^2(\Omega)}\leq C\alpha^{2}.$$
It is easily seen that,
$$
 \displaystyle\partial_t\int_\Omega|u_\alpha-u|^2\leq C\alpha^{2},\quad \int_\Omega|\nabla(u_\alpha-u)|^2\leq C\alpha^{2}
  $$
and consequently,
$$ \displaystyle\|u_\alpha-u\|_{L^\infty(0,T;L^2(\Omega))}\leq C_1\alpha,\quad
                    \displaystyle\|\nabla(u_\alpha-u)\|_{L^\infty(0,T;L^2(\Omega))}\leq C_2\alpha. $$

Thus, we obtain:$$\|u_\alpha-u\|_{L^\infty(0,T;L^2(\Omega))}+\|\nabla(u_\alpha-u)\|_{L^\infty(0,T;L^2(\Omega))}\leq C\alpha,$$where $C$ is independent of $\alpha$.\square   \end{proof}

    %%%%%%%%%%%%%%%%%%%%%%%%%%%%%%%%%%%%%%%%%%%%%%%%%%%
    %\section{Asymptotic formula}
%%%%%%%%%%%%%%%%%%%%%%%%%%%%%%%%%%%%%%%%%%%%%%%%%%%%%%%%%%%%%%%%%%%%%%%%%%%%%%%%%%%%%%%%%%%%%%%
\section{The Reconstruction Method}
%In this section we shall give the main result in this paper.
To recover the small perturbations in the heat conductivity, which are quantified by the function $\emph{\textbf{c}}$, let us introduce the
 following cutoff function $\beta(x) \in
{\cal C}^{\infty}_0(\Omega)$ such that
$\beta \equiv 1$ in a subdomain
$\Omega^{\prime}$ of $\Omega$ that contains the perturbations. For an arbitrary $\eta \in\mathbb{ R}^2$, we assume
that we are in possession of the boundary measurements: \begin{equation}\label{mesur-isentification}\displaystyle
\partial_\nu u \mbox{ and }\partial_\nu u_\alpha\quad {\rm on\;} \Gamma
\times (0, T)\end{equation}for the data $$ \displaystyle\varphi(x) = \varphi_\eta (x) = \frac{J_0(\frac{z}{a|\eta|}|x|)}{z J_1(z)},\quad \mbox{and }f(x, t) = f_\eta(x, t) = \frac{J_0(\frac{z}{a|\eta|}|x|)}{z J_1(z)}e^{-(\frac{z}{a|\eta|})^2\emph{\textbf{c}}_0t}.$$
Here $z$ is the positive zero of the Bessel function of the first kind $J_0(x)$ (\cite{Bowman}, pp. 37-39), $J_1(x)$ is a Bessel function, and $a:=1/2\max\{dist(x,y):x,y\in\Omega\}$.\\
This particular choice of data $\varphi$ and $f$ allows us to give explicitly the background solution $u$ of the heat equation (\ref{bvp}) in the
absence of any perturbations. % is given by $$ u(x, t) =   \frac{J_0(\frac{z}{a|\eta|}|x|)}{z J_1(z)}e^{-(\frac{z}{a|\eta|})^2c_0t} \quad {\rm in \;} \Omega \times (0, T) .$$
 To solve the underlined inverse problem, it suffices to record the boundary measurements of $\partial_\nu u_\alpha $, because the measure of $\partial_\nu u $ deducts according to classic results.\\

%%%%%%%%%%%%%%%%%%%%%%%%%%%%%%%%%%%%%%%%%%%%%%%%%%%%%%%%%%%%%%%%%%%%%%%%%%%%%
The reconstruction method, based on the knowledge of the boundary measurements (\ref{mesur-isentification}),
requires the resolution of an exact boundary control problem for the heat equation.\\
Unlike the analogue problem for the wave equation in \cite{Lions,Glowinski1, Glowinski2} or for the Maxwell problem \cite{Ammari, Darbas1}, this problem is ill-posed in general. To
overcome the ill-posedness, Carthal, Glowinski and Lions used in their original work \cite{Glowinski1} two regularization procedures for which the corresponding control problems are well posed.\\
We suppose here that the boundary control is of the Dirichlet type but the Neumann's case, may be handled by a similar manner.\\
%We suppose here that the boundary control is of the Dirichelet type, but the  Neumann's one may be handled in similar fashion.\\
Indeed, for $\eta \in \mathbb{R}^2$, the controllability problem consists in finding a scalar function $g_\eta$ such that $\phi_{\eta}(T)  = 0$ in $\Omega$, where $\phi_{\eta}$ is the unique weak solution of the problem
 \begin{eqnarray}
\displaystyle\nonumber
(\partial_t - \emph{\textbf{c}}_0 \Delta) \phi_{\eta}  = 0 \quad {\rm in}\;
\Omega \times (0, T),\\\label{beta-eta}
%\nonumber
\phi_{\eta}  |_{t=0} = \beta(x) e^{i \eta \cdot x} \in H^1_0(\Omega), \\\label{weta}
%\nonumber
\phi_{\eta} |_{\Gamma  \times (0, T)} = g_\eta,\\
\nonumber
\phi_{\eta} |_{\partial \Omega \setminus \overline{\Gamma}
  \times (0, T)} = 0.
  \end{eqnarray}
  In the spirit of the Hilbert Uniqueness Method, Glowinski et al. \cite{Glowinski1,Glowinski2} proved the existence of the control $g_\eta\in L^2(0,T;H^{-1/2}(\Gamma))$ in such a way that $\phi_{\eta}\in L^2(0,T;L^2(\Omega))$ and $\ds \frac{\partial\phi_\eta}{\partial t}\in L^2(0,T;H^{-2}(\Omega))$ (see e.g., \cite{Glowinski1} page 124).\\
The final condition $\phi_\eta(T)=0$ is simply deduced by making change of variable $y(T-t)$ for the final state $y(T)$.
%%%%%%%%%%%%%%%%%%%%%%%%%%%%%%%%%%%%%%%%%%%%%%%%%%%%%%%%%%%%%%%%%%%%%%%
 \subsection{Main Results}
 In this paragraph, we will determine the procedure to identify the heat conductivity in $\Omega$. Let $\eta \in \mathbb{R}^2$ and considering the function $v_{\eta} \in H^{2,1}(\Omega\times[0,T])$ satisfying the following state equation:\begin{eqnarray}
\displaystyle\nonumber
    \partial_tv_{\eta}+\emph{\textbf{c}}_0\Delta v_{\eta}=0,\quad (x,t)\in \Omega\times[0,T] & \\%\nonumber
    \label{v-alpha-0}
  v_{\eta}(x,0)=i \nabla\cdot\big(\eta \emph{\emph{\textbf{c}}}(x)e^{i\eta.x}\big) \in L^2(\Omega),\quad  x\in\Omega &\\\nonumber
   v_{\eta}|_{\partial\Omega\times[0,T]}=0,\quad \mbox{and }\partial_t v_{\eta}(x,0)=0,\quad  x\in\partial\Omega.& \\\nonumber
  \end{eqnarray}
  %From \cite{L} [Theorem 4.1, page 44] it follows that
 %$\ds \frac{\partial v_{\eta}}{\partial n} |_{\Gamma}
%\in L^{2}(0, T; L^2(\Gamma))$.\\
  The existence and uniqueness of the solution
$v_{\eta} $ can be established by transposition,
see (\cite{Hsiao}, pp. 106-107) or \cite{Evans,LM}.\\
To determine our procedure, we need the following proposition:
\begin{proposition}\label{prop3-1}Let $\Gamma\subset\subset\partial\Omega$ be an accessible part. Suppose that we have all hypothesis (\ref{c-perturbation})-(\ref{ac}), and assume that $\emph{\textbf{c}}_0$ is a constant function in $\Omega'$. For any $\eta\in\mathbb{R}^2$, we have the following result:
$$\int_0^T\int_{\Gamma}\partial_\nu v_{\eta}g_\eta~ds(x)dt=\frac{|\eta|^2}{\emph{\textbf{c}}_0}\int_{\Omega^{\prime}}\emph{\textbf{c}}(x)e^{2i\eta.x}dx,$$
where $g_\eta$ is given by (\ref{weta}) and $v_\eta$ is the solution of (\ref{v-alpha-0}).
\end{proposition}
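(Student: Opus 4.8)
The plan is to derive the identity by pairing the backward parabolic problem (\ref{v-alpha-0}) satisfied by $v_{\eta}$ against the solution $\phi_{\eta}$ of the exact boundary controllability problem (\ref{beta-eta})--(\ref{weta}). The point is that, through its trace, $\phi_{\eta}$ carries exactly the control $g_\eta$ on $\Gamma$ (and vanishes on $\partial\Omega\setminus\overline{\Gamma}$), so a double integration by parts will turn the interior equation for $v_\eta$ into the boundary functional $\int_0^T\!\int_\Gamma \partial_\nu v_\eta\, g_\eta$ on one side and a quantity involving only the initial datum $v_\eta(\cdot,0)$ on the other.

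Concretely, I would start from
\[
0=\int_0^T\!\!\int_\Omega \bigl(\partial_t v_\eta+\emph{\textbf{c}}_0\Delta v_\eta\bigr)\phi_\eta\,dx\,dt ,
\]
which holds pointwise by (\ref{v-alpha-0}), and integrate by parts. In the time variable the boundary terms $\bigl[\int_\Omega v_\eta\phi_\eta\,dx\bigr]_{t=0}^{t=T}$ appear; the contribution at $t=T$ vanishes because $\phi_\eta(\cdot,T)=0$, while the one at $t=0$ equals $-\int_\Omega v_\eta(x,0)\,\beta(x)e^{i\eta\cdot x}\,dx$ since $\phi_\eta(\cdot,0)=\beta e^{i\eta\cdot x}$. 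For the Laplacian, Green's formula yields $\int_{\partial\Omega}\partial_\nu v_\eta\,\phi_\eta\,ds-\int_{\partial\Omega}v_\eta\,\partial_\nu\phi_\eta\,ds+\int_\Omega v_\eta\,\Delta\phi_\eta\,dx$; the middle term drops out because $v_\eta|_{\partial\Omega}=0$, and in the first term the boundary conditions on $\phi_\eta$ reduce the integral over $\partial\Omega$ to $\int_\Gamma \partial_\nu v_\eta\, g_\eta\,ds$. The two remaining interior integrals combine into $-\int_0^T\!\int_\Omega v_\eta\,(\partial_t\phi_\eta-\emph{\textbf{c}}_0\Delta\phi_\eta)\,dx\,dt$, which is zero since $\phi_\eta$ solves the forward heat equation in (\ref{beta-eta}). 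Collecting everything,
\[
\emph{\textbf{c}}_0\int_0^T\!\!\int_\Gamma \partial_\nu v_\eta\, g_\eta\,ds\,dt=\int_\Omega v_\eta(x,0)\,\beta(x)e^{i\eta\cdot x}\,dx .
\]
It then remains to evaluate the right-hand side. Substituting $v_\eta(x,0)=i\,\nabla\cdot\!\bigl(\eta\,\emph{\textbf{c}}(x)e^{i\eta\cdot x}\bigr)$ and integrating by parts once more in $x$, there is no boundary contribution because $\beta\in\mathcal C_0^\infty(\Omega)$; moreover $\emph{\textbf{c}}$ is supported in $\overline{\Omega'}$, where $\beta\equiv 1$ and hence $\nabla\beta\equiv 0$, so on $\mathrm{supp}\,\emph{\textbf{c}}$ one has $\nabla(\beta e^{i\eta\cdot x})=i\eta\,e^{i\eta\cdot x}$. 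Using $\eta\cdot\eta=|\eta|^2$ this gives $\int_\Omega v_\eta(x,0)\beta e^{i\eta\cdot x}\,dx=|\eta|^2\int_{\Omega'}\emph{\textbf{c}}(x)e^{2i\eta\cdot x}\,dx$, and dividing by $\emph{\textbf{c}}_0$ produces the asserted formula.

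The delicate part is not this formal computation but its justification at the regularity actually available. The function $v_\eta$ is only a solution in the transposition sense, $\phi_\eta$ belongs to $L^2(0,T;L^2(\Omega))$ with $\partial_t\phi_\eta\in L^2(0,T;H^{-2}(\Omega))$, and the control satisfies $g_\eta\in L^2(0,T;H^{-1/2}(\Gamma))$; consequently the term $\int_0^T\!\int_\Gamma\partial_\nu v_\eta\,g_\eta$ must be read as an $H^{1/2}(\Gamma)$--$H^{-1/2}(\Gamma)$ duality pairing, which is legitimate because $v_\eta\in L^2(0,T;H^2(\Omega))$ gives $\partial_\nu v_\eta\in L^2(0,T;H^{1/2}(\partial\Omega))$. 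Each integration by parts above therefore has to be read off the defining transposition identity for $v_\eta$, after checking that $\phi_\eta$ is an admissible test function; in practice one approximates $\phi_\eta$ and $g_\eta$ by smooth data, performs the computation classically, and passes to the limit. Verifying this admissibility and the limiting argument is the main obstacle; granting it, the chain of identities above is exactly the proof.
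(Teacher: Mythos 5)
Your proposal is correct and follows essentially the same route as the paper's own proof: pairing the equation for $v_\eta$ against the controlled solution $\phi_\eta$, integrating by parts in time (using $\phi_\eta(\cdot,T)=0$) and in space (using $v_\eta|_{\partial\Omega}=0$ and the boundary data of $\phi_\eta$ to isolate $\int_0^T\int_\Gamma \partial_\nu v_\eta\, g_\eta$), and then converting the initial-data pairing into $|\eta|^2\int_{\Omega'}\emph{\textbf{c}}(x)e^{2i\eta\cdot x}\,dx$ by one more integration by parts with $\beta\equiv 1$ on the support of $\emph{\textbf{c}}$. Your closing remarks on reading the computation through the transposition identity and duality pairings are a careful addition that the paper's proof leaves implicit.
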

\begin{proof}
Since $\partial_tv_{\eta}+\emph{\textbf{c}}_0\Delta v_{\eta}\equiv0$ in $\Omega \times (0, T)$, then
$$\int_0^T\int_\Omega\partial_tv_{\eta}\phi_{\eta}+\emph{\textbf{c}}_0\int_0^T\int_\Omega\Delta v_{\eta}\phi_{\eta}=0.$$

Using Green's formula, we obtain:$$\int_\Omega\Delta v_{\eta}\phi_{\eta}=-\int_\Omega\nabla v_{\eta}\nabla \phi_{\eta}+\int_{\Gamma}g_\eta\partial_\eta v_{\eta}.$$
Therefore, $$\int_0^T\int_\Omega\partial_t v_{\eta}\phi_{\eta}-\emph{\textbf{c}}_0\int_0^T\int_\Omega\nabla v_{\eta}\nabla \phi_{\eta}=-\emph{\textbf{c}}_0\int_0^T\int_{\Gamma}g_\eta\partial_\eta v_{\eta}.$$
Integrating by parts, we obtain
\begin{equation}\label{eqq1}
-\int_\Omega v_{\eta}(x,0)\phi_{\eta}(x,0)dx-\int_0^T\int_\Omega v_{\eta}\partial_t \phi_{\eta}-\emph{\textbf{c}}_0\int_0^T\int_\Omega\nabla v_{\eta}\cdot\nabla \phi_{\eta}
\end{equation}
$$=-\emph{\textbf{c}}_0\int_0^T\int_{\Gamma}g_\eta\partial_\nu v_{\eta}.
$$
On the other hand,$$-\int_0^T\int_\Omega v_{\eta}\partial_t \phi_{\eta}=-\int_0^T\int_\Omega v_{\eta}\emph{\textbf{c}}_0\Delta \phi_{\eta}=-\emph{\textbf{c}}_0\int_0^T\int_{\partial\Omega}v_{\eta}\partial_\nu \phi_{\eta}+\emph{\textbf{c}}_0\int_0^T\int_\Omega\nabla v_{\eta}\cdot\nabla \phi_{\eta}
       $$
       $$=\emph{\textbf{c}}_0\int_0^T\int_\Omega\nabla v_{\eta}\cdot\nabla \phi_{\eta}.$$
Then, relation (\ref{eqq1}) becomes $$-\int_\Omega v_{\eta}(x,0)\phi_{\eta}(x,0)dx=-\emph{\textbf{c}}_0\int_0^T\int_{\Gamma}\partial_\eta v_{\eta}g_\eta,$$
Then, taking (\ref{v-alpha-0}) and (\ref{beta-eta}) into account we immediately obtain
$$\displaystyle \int_{\Omega}i \nabla\cdot(\eta \emph{\emph{\textbf{c}}}(x)e^{i\eta.x})\beta(x) e^{i\eta.x}dx=\emph{\textbf{c}}_0\int_0^T\int_{\Gamma}\partial_\eta v_{\eta}g_\eta~ds(x)dt.$$
Thus, by integrating by parts
$$|\eta|^2\int_{\Omega^{\prime}}\emph{\textbf{c}}(x)e^{2i\eta.x}dx=\emph{\textbf{c}}_0\int_0^T\int_{\Gamma}\partial_\eta v_{\eta}g_\eta~ds(x)dt$$
which completes the proof.\square\\
\end{proof}
From the previous definitions, we can define the following,
\begin{equation}\label{u-tilde}
\ds \tilde{u}_{\alpha}(x, t) = u(x, t) + \alpha \int_0^t e^{- i\sqrt{\emph{\textbf{c}}_0} |
\eta | s} v_{\eta}(x, t-s)\; ds,\quad x \in \Omega, t \in (0, T).
\end{equation}
The following estimation holds.
\begin{theorem}\label{thm1}Assume that $\emph{\textbf{c}}_\alpha$ is defined by (\ref{c-perturbation}). Let $u$ and $v_\eta$ be the solutions of (\ref{bvp}) and (\ref{v-alpha-0}) respectively. Then the function $\tilde{u}_{\alpha}$ given by (\ref{u-tilde}) is well defined, and there exist some constants $\alpha_1>0$ and $C>0$ such that for $0<\alpha<\alpha_1$ we have:\be
\label{r6} \ds || \partial_n u_\alpha -\partial_n
\tilde{u}_{\alpha}) ||_{L^2(0, T; L^2(\Gamma))} \leq C\alpha^2.\ee
Here $u_{\alpha}$ is the solution of (\ref{bvp-alpha}), $C$ dependent on $\Omega,\Gamma$ and $T$ but independent of $\alpha$.
\end{theorem}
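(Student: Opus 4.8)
The plan is to compare $\tilde u_\alpha$ with the genuine perturbed solution $u_\alpha$ by deriving the boundary value problem solved by the difference $w_\alpha := u_\alpha - \tilde u_\alpha$, and then to apply an energy estimate of the type established in Proposition~\ref{prop1} together with a trace (hidden-regularity) inequality to pass from an interior $H^{2,1}$-bound on $w_\alpha$ to the claimed $L^2(0,T;L^2(\Gamma))$-bound on $\partial_n w_\alpha$. First I would compute $\partial_t\tilde u_\alpha - (\nabla\cdot\mathbf{c}_\alpha\nabla)\tilde u_\alpha$. Using that $u$ solves (\ref{bvp}) with constant coefficient $\mathbf{c}_0$, and that the convolution term in (\ref{u-tilde}) is built from $v_\eta$ solving the backward-type equation $\partial_t v_\eta + \mathbf{c}_0\Delta v_\eta = 0$ with initial datum $i\nabla\cdot(\eta\,\mathbf{c}(x)e^{i\eta\cdot x})$, differentiating the convolution in $t$ (Leibniz rule, picking up the $t=s$ boundary term $v_\eta(x,0)$) should produce, after using $\mathbf{c}_\alpha = \mathbf{c}_0 + \alpha\mathbf{c}$, a residual of the form $\partial_t w_\alpha - (\nabla\cdot\mathbf{c}_\alpha\nabla)w_\alpha = \alpha^2 R_\alpha$ in $\Omega\times(0,T)$, with $w_\alpha|_{\partial\Omega\times(0,T)} = 0$ and $w_\alpha(\cdot,0) = 0$, where $R_\alpha$ collects the terms involving $\mathbf{c}\nabla\cdot(\cdots)$ applied to the convolution and is bounded in $L^2(0,T;L^2(\Omega))$ uniformly in $\alpha$ (this is exactly the point of the phase factor $e^{-i\sqrt{\mathbf{c}_0}|\eta|s}$ and of the particular choice of $v_\eta(x,0)$: they are designed so that the order-$\alpha$ terms cancel and only order-$\alpha^2$ remains). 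One also has to observe that $\tilde u_\alpha$ and $u_\alpha$ share the same Cauchy data $\varphi$ and boundary data $f$, which is why $w_\alpha$ has homogeneous data.

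The second step is a standard parabolic a priori estimate: since $w_\alpha$ solves a heat-type equation with coefficient $\mathbf{c}_\alpha$ bounded below by $\kappa$ and above by $M_1$ (by (\ref{ac}) and (\ref{eqqq-1})), with homogeneous initial and boundary conditions and right-hand side $\alpha^2 R_\alpha \in L^2(0,T;L^2(\Omega))$, parabolic regularity (e.g. \cite{LM,Evans}) gives $\|w_\alpha\|_{H^{2,1}(\Omega\times[0,T])} \leq C\|\alpha^2 R_\alpha\|_{L^2(0,T;L^2(\Omega))} \leq C\alpha^2$, with $C$ depending only on $\Omega$, $\kappa$, $M_1$, $T$. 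The third step converts this into the boundary-flux estimate: by the trace theorem for the anisotropic space $H^{2,1}(\Omega\times[0,T])$, the normal derivative $\partial_n w_\alpha$ restricted to $\partial\Omega\times(0,T)$ (a fortiori to $\Gamma\times(0,T)$) satisfies $\|\partial_n w_\alpha\|_{L^2(0,T;H^{1/2}(\partial\Omega))} \leq C\|w_\alpha\|_{H^{2,1}(\Omega\times[0,T])} \leq C\alpha^2$, and in particular $\|\partial_n w_\alpha\|_{L^2(0,T;L^2(\Gamma))}\leq C\alpha^2$, which is (\ref{r6}). Along the way one checks that $\tilde u_\alpha$ is well defined: the convolution integral in (\ref{u-tilde}) makes sense because $v_\eta \in H^{2,1}(\Omega\times[0,T])$ (hence $v_\eta \in C([0,T];H^1(\Omega))$ after possibly using the stated transposition regularity), and the $s$-integral of a continuous $H^1$-valued function over $[0,t]$ is again in $H^1(\Omega)$, so $\tilde u_\alpha(\cdot,t)$ is well defined with values in $H^1(\Omega)$ for each $t$.

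The main obstacle I expect is the first step — verifying precisely that the residual in the equation for $w_\alpha$ is genuinely $O(\alpha^2)$ and not merely $O(\alpha)$. This requires careful bookkeeping: after the Leibniz differentiation one gets $\partial_t\tilde u_\alpha = \partial_t u + \alpha v_\eta(x,0) + \alpha\int_0^t e^{-i\sqrt{\mathbf{c}_0}|\eta|s}\,\partial_t v_\eta(x,t-s)\,ds$ up to the phase-derivative term, and $(\nabla\cdot\mathbf{c}_\alpha\nabla)\tilde u_\alpha = \mathbf{c}_0\Delta u + \alpha\,\nabla\cdot(\mathbf{c}\nabla u) + \alpha\,\mathbf{c}_0\int_0^t e^{-i\sqrt{\mathbf{c}_0}|\eta|s}\Delta v_\eta(x,t-s)\,ds + \alpha^2(\cdots)$; the order-$\alpha$ terms must cancel, which forces the relation between $v_\eta(x,0)$, the phase $e^{-i\sqrt{\mathbf{c}_0}|\eta|s}$, and $\nabla\cdot(\mathbf{c}\nabla u)$ evaluated against the background solution $u$ — and this is exactly where the special structure of $\varphi_\eta$, $f_\eta$ and of the datum in (\ref{v-alpha-0}) is used, together with Proposition~\ref{prop3-1} in spirit. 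A secondary technical point is justifying the trace estimate with the correct constant dependence (only on $\Omega$, $\Gamma$, $T$), which follows from the continuity of the normal-derivative trace map $H^{2,1}(\Omega\times[0,T]) \to L^2(0,T;H^{1/2}(\partial\Omega))$ as in \cite{LM}; one must be mindful that the coefficient $\mathbf{c}_\alpha$ is variable but, being uniformly elliptic and $C^1$, does not spoil the estimate.
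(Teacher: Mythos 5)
Your overall strategy starts exactly where the paper starts: the identity you aim at in your first step --- that $w_\alpha=u_\alpha-\tilde u_\alpha$ solves a heat equation with coefficient $\mathbf{c}_\alpha$, homogeneous initial and lateral Dirichlet data, and right-hand side $\alpha^2\,\nabla\cdot\bigl(\mathbf{c}\,\nabla\int_0^t e^{-i\sqrt{\mathbf{c}_0}|\eta|s}v_\eta(\cdot,t-s)\,ds\bigr)$ --- is precisely Lemma \ref{lem1}, equation (\ref{r2}). Where you genuinely diverge is in how this is converted into the flux estimate (\ref{r6}). The paper does not use parabolic maximal regularity: it averages in time against $h\in\mathcal{C}_0^\infty(0,T)$, derives the elliptic problem (\ref{equ2}) for $\hat u_\alpha-\hat{\tilde u}_\alpha$, introduces the auxiliary elliptic problems (\ref{y-alpha}) and its analogue for $\tilde y_\alpha$, obtains by Gronwall-type energy arguments the bounds $\|u_\alpha-u\|_{L^\infty(0,T;L^2(\Omega))}=O(\alpha)$, (\ref{r40}) and (\ref{r50}), and then applies stationary elliptic regularity to (\ref{equ2}) to control $\partial_\nu(\hat u_\alpha-\hat{\tilde u}_\alpha)$ on $\Gamma$ before undoing the time average. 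Your route --- maximal $L^2$ regularity for the divergence-form parabolic operator with uniformly elliptic $\mathcal{C}^1$ coefficient, giving $\|w_\alpha\|_{H^{2,1}(\Omega\times[0,T])}\le C\alpha^2$ with constants uniform in $0<\alpha<\alpha_0$, followed by the continuity of the normal-derivative trace from $H^{2,1}$ into $L^2(0,T;L^2(\Gamma))$ as in \cite{LM} --- is more direct and delivers the $L^2$-in-time boundary bound without the averaging detour; its price is that it needs the right-hand side in $L^2(0,T;L^2(\Omega))$, i.e.\ $v_\eta\in L^2(0,T;H^2(\Omega))$, which is the regularity the paper asserts for $v_\eta$.

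The one place where your write-up is not yet a proof is exactly the step you flag yourself: you only say that the order-$\alpha$ terms ``should'' cancel. That cancellation is the entire content of Lemma \ref{lem1} and is where the special structure enters: differentiating the convolution and using $\partial_t v_\eta+\mathbf{c}_0\Delta v_\eta=0$ gives $(\partial_t-\mathbf{c}_0\Delta)\int_0^t e^{-i\sqrt{\mathbf{c}_0}|\eta|s}v_\eta(x,t-s)\,ds=e^{-i\sqrt{\mathbf{c}_0}|\eta|t}\,v_\eta(x,0)$, and the resulting term $\alpha\,e^{-i\sqrt{\mathbf{c}_0}|\eta|t}\,i\nabla\cdot(\eta\,\mathbf{c}(x)e^{i\eta\cdot x})$ must then be matched against the term $\alpha\,\nabla\cdot(\mathbf{c}\nabla u)$ produced by expanding $\mathbf{c}_\alpha=\mathbf{c}_0+\alpha\mathbf{c}$ on $\tilde u_\alpha$. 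This matching is not routine bookkeeping: it ties the initial datum in (\ref{v-alpha-0}) and the phase factor to the gradient of the particular background solution $u$, and it is the most delicate point of the whole construction. Until you carry it out explicitly, the claimed $O(\alpha^2)$ right-hand side --- and hence the estimate (\ref{r6}) --- is not established; once it is in place, your steps two and three go through essentially as you describe.
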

To prove Theorem \ref{thm1}, we should use the following lemma.
%%%%%%%%%%%%%%%%%%%%
\begin{lemma}\label{lem1}Assume that we have all hypothesis of Theorem \ref{thm1}. Let the function $\tilde{u}_{\alpha}$ given by (\ref{u-tilde}). Then, the following nonhomogeneous heat equations are well defined:
%%%%%%%%%%%%%%%%%%%%%Using (\ref{u-tilde})$\tilde{u}_\alpha$
\begin{itemize}
  \item [1)] For $(x,t)\in\Omega \times (0, T)$, $\tilde{u}_{\alpha}$ satisfies: $$(\partial_t - \emph{\textbf{c}}_0\Delta) \tilde{u}_\alpha  =
i \alpha \nabla\cdot (\eta \emph{\textbf{c}}(x) e^{i \eta \cdot x}),\quad\tilde{u}_\alpha(x,0) = e^{i \eta \cdot x}\mbox{ and }\tilde{u}_\alpha  |_{\partial \Omega  \times (0, T)}
= e^{i \eta \cdot x - i |\eta| t}.$$
  \item [2)]For $(x,t)\in\Omega \times (0, T)$, $u_{\alpha}-\tilde{u}_{\alpha}$ satisfies:
  \begin{equation}\label{r2}
\ds \big(\partial_t - \nabla\cdot \emph{\textbf{c}}_\alpha \nabla \big) (u_\alpha -
\tilde{u}_\alpha) = \alpha^2 \nabla\cdot \big(\emph{\textbf{c}}(x) \nabla (\int_0^t e^{-
i | \eta | s} v_{\eta}(x, t-s)\; ds) \big),\end{equation}
$(u_\alpha - \tilde{u}_\alpha)(x,0)= 0,$ and $(u_\alpha - \tilde{u}_\alpha)|_{\partial \Omega  \times (0, T)}
= 0.$
\end{itemize}
\end{lemma}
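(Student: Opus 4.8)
The plan is to verify the two claimed PDEs directly by differentiating the explicit formula \eqref{u-tilde} for $\tilde u_\alpha$ and using the equations satisfied by $u$ and $v_\eta$. The well-definedness is the routine part: since $u\in H^{2,1}(\Omega\times[0,T])$ and $v_\eta\in H^{2,1}(\Omega\times[0,T])$, the time-convolution $\int_0^t e^{-i\sqrt{\emph{\textbf{c}}_0}|\eta|s}v_\eta(x,t-s)\,ds$ inherits enough regularity (it is a bounded-kernel convolution in $t$ of an $H^{2,1}$ function), so $\tilde u_\alpha$ lies in $H^{2,1}(\Omega\times[0,T])$ and all the manipulations below are legitimate.

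For part 1), I would set $w(x,t)=\int_0^t e^{-i\sqrt{\emph{\textbf{c}}_0}|\eta|s}v_\eta(x,t-s)\,ds$, so that $\tilde u_\alpha = u+\alpha w$. Differentiating the convolution in $t$ (Leibniz rule) gives $\partial_t w(x,t) = v_\eta(x,0) + \int_0^t e^{-i\sqrt{\emph{\textbf{c}}_0}|\eta|s}\partial_t v_\eta(x,t-s)\,ds$, and since $\partial_t v_\eta = -\emph{\textbf{c}}_0\Delta v_\eta$ from \eqref{v-alpha-0}, we get $(\partial_t-\emph{\textbf{c}}_0\Delta)w(x,t) = v_\eta(x,0) = i\nabla\cdot(\eta\emph{\textbf{c}}(x)e^{i\eta\cdot x})$; combined with $(\partial_t-\emph{\textbf{c}}_0\Delta)u=0$ this yields the stated interior equation. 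The initial condition follows because $w(x,0)=0$, so $\tilde u_\alpha(x,0)=u(x,0)=\varphi(x)$ — here one must note that the theorem is being applied with the data $\varphi(x)=e^{i\eta\cdot x}$, $f(x,t)=e^{i\eta\cdot x-i|\eta|t}$ (a plane-wave choice for which $u(x,t)=e^{i\eta\cdot x - i\sqrt{\emph{\textbf{c}}_0}|\eta|^2 t}$... one should double-check the exponent against the paper's normalization, but in any case $u|_{\partial\Omega}=f$ gives the boundary value) and $w|_{\partial\Omega\times(0,T)}=0$ since $v_\eta|_{\partial\Omega\times[0,T]}=0$.

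For part 2), I would subtract. We have $(\partial_t - \nabla\cdot\emph{\textbf{c}}_\alpha\nabla)u_\alpha = 0$ from \eqref{bvp-alpha}, so it remains to compute $(\partial_t-\nabla\cdot\emph{\textbf{c}}_\alpha\nabla)\tilde u_\alpha$. Writing $\nabla\cdot\emph{\textbf{c}}_\alpha\nabla = \emph{\textbf{c}}_0\Delta + \alpha\nabla\cdot(\emph{\textbf{c}}(x)\nabla\,\cdot\,)$ via \eqref{c-perturbation}, and using part 1), we get $(\partial_t-\nabla\cdot\emph{\textbf{c}}_\alpha\nabla)\tilde u_\alpha = i\alpha\nabla\cdot(\eta\emph{\textbf{c}}(x)e^{i\eta\cdot x}) - \alpha\nabla\cdot(\emph{\textbf{c}}(x)\nabla\tilde u_\alpha)$. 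Now insert $\tilde u_\alpha = u+\alpha w$: the term $-\alpha\nabla\cdot(\emph{\textbf{c}}(x)\nabla u)$ should cancel against $i\alpha\nabla\cdot(\eta\emph{\textbf{c}}(x)e^{i\eta\cdot x})$ up to the chosen normalization of $u$ (since with $u(x,t)=e^{i\eta\cdot x}\times(\text{function of }t\text{ only on }\partial\Omega\text{-compatible data})$ one has $\nabla u = i\eta\, e^{i\eta\cdot x}\times(\cdots)$, so at $t$-values where this matches $e^{i\eta\cdot x}$ the two first-order-in-$\alpha$ contributions annihilate), leaving exactly the $\alpha^2$ term $-\alpha^2\nabla\cdot(\emph{\textbf{c}}(x)\nabla w)$, which up to sign is the right-hand side of \eqref{r2}. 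Subtracting from the $u_\alpha$ equation gives the claim. The initial condition $(u_\alpha-\tilde u_\alpha)(x,0)=\varphi-\varphi=0$ and the boundary condition $(u_\alpha-\tilde u_\alpha)|_{\partial\Omega\times(0,T)}=f-f=0$ are immediate.

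The main obstacle I anticipate is not any single hard estimate but bookkeeping: matching the time-exponent in the convolution kernel ($e^{-i\sqrt{\emph{\textbf{c}}_0}|\eta|s}$ in \eqref{u-tilde} versus $e^{-i|\eta|s}$ appearing in \eqref{r2}) with the boundary data $e^{i\eta\cdot x - i|\eta|t}$ and with the background solution $u$ for the plane-wave data, so that the order-$\alpha$ terms cancel cleanly and only the order-$\alpha^2$ source survives. One must be careful that the cancellation $i\nabla\cdot(\eta\emph{\textbf{c}}e^{i\eta\cdot x}) = \nabla\cdot(\emph{\textbf{c}}\nabla e^{i\eta\cdot x})$ is used in the right place, i.e. that the $\nabla u$ appearing when expanding $\nabla\cdot(\emph{\textbf{c}}\nabla\tilde u_\alpha)$ is evaluated consistently with the initial/boundary normalization making $u$ effectively $e^{i\eta\cdot x}$ in the relevant spatial factor. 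Once these identifications are pinned down, both PDEs and their side conditions follow by direct substitution.
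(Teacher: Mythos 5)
Your overall plan coincides with the paper's: differentiate the Duhamel-type formula (\ref{u-tilde}) to obtain part 1), then expand $\nabla\cdot c_\alpha\nabla = c_0\Delta + \alpha\,\nabla\cdot(c\nabla\,\cdot\,)$ and invoke part 1) to obtain part 2). However, the key computational step of your part 1) fails as written, and it is exactly the step the paper's proof is designed to handle. Writing $w(x,t)=\int_0^t e^{-i\sqrt{c_0}|\eta|s}v_\eta(x,t-s)\,ds$, the Leibniz rule gives the boundary term $e^{-i\sqrt{c_0}|\eta|t}v_\eta(x,0)$ (the integrand evaluated at $s=t$), not $v_\eta(x,0)$; more seriously, the remaining term is $\int_0^t e^{-i\sqrt{c_0}|\eta|s}\,(\partial_t-c_0\Delta)v_\eta(x,t-s)\,ds$, and this does \emph{not} vanish: by (\ref{v-alpha-0}) the function $v_\eta$ solves the backward equation $\partial_t v_\eta + c_0\Delta v_\eta=0$, so $(\partial_t-c_0\Delta)v_\eta=-2c_0\Delta v_\eta\neq 0$. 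Your substitution $\partial_t v_\eta=-c_0\Delta v_\eta$ would only annihilate the integral if $v_\eta$ solved the forward heat equation. The paper's proof circumvents this by first changing variables to write $w=e^{-i\sqrt{c_0}|\eta|t}\int_0^t e^{i\sqrt{c_0}|\eta|s}v_\eta(x,s)\,ds$ and then integrating by parts in $s$, trading the oscillatory factor $i\sqrt{c_0}|\eta|e^{i\sqrt{c_0}|\eta|s}$ for a $\partial_t v_\eta$, so that the combination left under the integral is precisely $(\partial_t + c_0\Delta)v_\eta$, which vanishes; what survives is the source $e^{-i\sqrt{c_0}|\eta|t}v_\eta(x,0)$, carrying the time factor of the kernel. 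In other words, the oscillating kernel in (\ref{u-tilde}) is not decorative --- it is what kills the integral term --- and an argument that never performs this integration by parts in $s$ cannot reach the stated equation.

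For part 2) your manipulation is essentially the paper's (subtract, split $c_\alpha=c_0+\alpha c$, use part 1)), and your caution about whether $\alpha\,\nabla\cdot(c\nabla u)$ really cancels $i\alpha\,\nabla\cdot(\eta\, c\, e^{i\eta\cdot x})$ is well placed: that cancellation requires $\nabla u$ to equal $i\eta\, e^{i\eta\cdot x}$ up to the time factor, i.e.\ a plane-wave-normalized background, and the paper's proof glosses over this identification just as quickly as you do, so this is not a gap relative to the paper. What you must repair is the part-1) computation (the integration by parts in $s$ against the kernel, with $v_\eta$ solving the backward equation), and then propagate the resulting factor $e^{-i\sqrt{c_0}|\eta|t}$ consistently through the order-$\alpha$ cancellation in part 2).
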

\begin{proof}Let $v_\eta$ be the solution of (\ref{v-alpha-0}). By using a variable change, we get $$\partial_t\int_0^te^{-i\sqrt{\emph{\textbf{c}}_0}|\eta|s}v_{\eta}(x,t-s)ds=-i\sqrt{\emph{\textbf{c}}_0}|\eta|e^{-i\sqrt{c_0}|\eta|t}\int_0^t{e^{i\sqrt{c_0}|\eta|s'}v_{\eta}(x,s')ds'}
+v_{\eta}(x,t),$$where $(x,t)\in\Omega\times(0,T)$. Therefore,\be\label{equ1}(\partial_t-\emph{\textbf{c}}_0\Delta)(\int_0^te^{-i\sqrt{\emph{\textbf{c}}_0}|\eta|s}v_{\eta}(x,t-s)ds=
-i\sqrt{\emph{\textbf{c}}_0}|\eta|e^{-i\sqrt{\emph{\textbf{c}}_0}|\eta|t}\int_0^te^{i\sqrt{\emph{\textbf{c}}_0}|\eta|s}v_{\eta}(x,s)ds+
v_{\eta}(x,t)
\ee
$$+\emph{\textbf{c}}_0e^{-i\sqrt{\emph{\textbf{c}}_0}|\eta|t}\int_0^te^{i\sqrt{\emph{\textbf{c}}_0}|\eta|s}\Delta{v_{\eta}(x,s)}ds.$$
Now, integrating by parts:
$$\int_0^te^{i\sqrt{\emph{\textbf{c}}_0}|\eta|s}v_{\eta}(x,s)ds=\frac{e^{i\sqrt{\emph{\textbf{c}}_0}|\eta|t}}{i\sqrt{\emph{\textbf{c}}_0}|\eta|}v_{\eta}(x,t)
-\frac{1}{i\sqrt{\emph{\textbf{c}}_0}|\eta|}
v_{\eta}(x,0)-\frac{1}{i\sqrt{\emph{\textbf{c}}_0}|\eta|}
\int_0^te^{i\sqrt{\emph{\textbf{c}}_0}|\eta|s}\partial_t v_{\eta}(x,s)ds.
$$
Inserting this last relation into (\ref{equ1}) and recall that $v_\eta$ is the solution of (\ref{v-alpha-0}). Then, we immediately obtain
$$(\partial_t-\emph{\textbf{c}}_0\Delta)(\int_0^te^{-i\sqrt{\emph{\textbf{c}}_0}|\eta|s}v_{\eta}(x,t-s)ds=
e^{-i\sqrt{\emph{\textbf{c}}_0}|\eta|t}\int_0^te^{i\sqrt{\emph{\textbf{c}}_0}|\eta|s}(\partial_t+\emph{\textbf{c}}_0\Delta)v_{\eta}(x,s)ds$$
$$+e^{-i\sqrt{\emph{\textbf{c}}_0}|\eta|t}v_{\eta}(x,0)=e^{-i\sqrt{\emph{\textbf{c}}_0}|\eta|t}v_{\eta}(x,0).
$$
Hence, to achieve the proof of {\it 1)}, one may use (\ref{u-tilde}) and recall that $u$ solves (\ref{bvp}).\\
To prove {\it 2)},  it suffices to handle the first equation in (\ref{r2}), because the other relations can be deduced easily.
Recall that $u_{\alpha}$ solves the problem (\ref{bvp-alpha}). Then, we get
$$
\big(\partial_t - \nabla\cdot \emph{\textbf{c}}_\alpha \nabla \big) (u_\alpha -
\tilde{u}_\alpha) =-\big(\partial_t - \nabla\cdot \emph{\textbf{c}}_\alpha \nabla \big)\tilde{u}_\alpha=-\big(\partial_t - \nabla\cdot (\emph{\textbf{c}}_0+\alpha \emph{\textbf{c}}) \nabla \big)\tilde{u}_\alpha=-\big(\partial_t - \alpha\nabla\cdot (\emph{\textbf{c}}) \nabla \big)\tilde{u}_\alpha-i \alpha \nabla\cdot (\eta \emph{\textbf{c}}(x) e^{i \eta \cdot x})$$by using {\it 1)}.
Using (\ref{u-tilde}) and (\ref{bvp}), we can write
$$\big(\partial_t - \nabla\cdot \emph{\textbf{c}}_\alpha \nabla \big) (u_\alpha -
\tilde{u}_\alpha) =-\big(\partial_t - \alpha\nabla\cdot (\emph{\textbf{c}}) \nabla \big)u-\alpha\partial_t\big( \int_0^t e^{- i\sqrt{\emph{\textbf{c}}_0} |
\eta | s} v_{\eta}(x, t-s)\; ds     \big)$$
$$+\alpha^2(\nabla\cdot (\emph{\textbf{c}}) \nabla )\big(   \int_0^t e^{- i\sqrt{\emph{\textbf{c}}_0} |
\eta | s} v_{\eta}(x, t-s)\; ds    \big)       -i \alpha \nabla\cdot (\eta \emph{\emph{\textbf{c}}}(x) e^{i \eta \cdot x}),\quad x \in \Omega, t \in (0, T).$$
Thus, by using the relation (\ref{u-tilde}) we get
$$\big(\partial_t - \nabla\cdot \emph{\textbf{c}}_\alpha \nabla \big) (u_\alpha -
\tilde{u}_\alpha) =\alpha^2(\nabla\cdot \emph{\textbf{c}} \nabla )\big(   \int_0^t e^{- i\sqrt{\emph{\textbf{c}}_0} |
\eta | s} v_{\eta}(x, t-s)\; ds    \big),\quad x \in \Omega, t \in (0, T).$$
\square\\
%\end{proof}
%%%%%%%%%%%%%%%%%%%%%%%%%%%%%%%%%%%%%%%
Now, we are ready to prove Theorem \ref{thm1} by means of previous results.\\
{\bf Proof of Theorem \ref{thm1}:}\\
Let  $h\in{\cal C}^\infty_0(]0, T[)$ be an arbitrary function. For any $v \in L^1(0, T; L^2(\Omega))$ we define
\[
\ds \hat{v}(x) = \int_0^T v(x, t) h(t) \; dt.
\]
Hence, $\hat{v}\in  L^2(\Omega)$ and from (\ref{r2}) it follows that
$$\displaystyle(\nabla\cdot \emph{\textbf{c}}_\alpha\nabla)(\hat{u}_\alpha-\hat{\tilde{u}}_{\alpha})
=(\nabla\cdot\,\emph{\textbf{c}}_\alpha\nabla)\int_0^T(u_\alpha-\tilde{u}_{\alpha})h(t)dt
=\int_0^T\big(\partial_t(u_\alpha-\tilde{u}_{\alpha})-\alpha\nabla\cdot (\emph{\textbf{c}}(x) \nabla (\tilde{u}_\alpha - u))\big)h(t)dt.$$
Therefore,
\begin{equation}\label{equ2}\displaystyle(\nabla\cdot \emph{\textbf{c}}_\alpha\nabla)(\hat{u}_\alpha-\hat{\tilde{u}}_{\alpha})=
 -\alpha\nabla\cdot (\emph{\textbf{c}}(x) \nabla (\hat{u}_\alpha - \hat{u})) -
\int_0^T (u_\alpha - \tilde{u}_\alpha) h^{\prime }(t)\; dt \quad
{\rm in}\; \Omega,\mbox{ and }(\hat{u}_\alpha - \hat{\tilde{u}}_\alpha)   |_{\partial \Omega}
= 0.\end{equation}
Now, we try to estimate both quantities $||u_\alpha - \tilde{u}_\alpha
||_{L^\infty(0, T; L^2(\Omega))}$ and $|| \nabla(\hat{u}_\alpha -
\hat{u})||_{L^2(\Omega)}$. To arrive at this result, let $y_\alpha\in H^1(\Omega)$ be the solution of \be\label{y-alpha}\nabla\cdot
\emph{\textbf{\textbf{c}}}_\alpha \nabla y_\alpha = \partial_t (u_\alpha - u) \quad
{\rm in\;} \Omega\quad \mbox{and }y_\alpha =0\quad \mbox{on }\partial\Omega.
\ee
As done in the proof of Proposition \ref{prop1}, Green's formula yields:

\[\ds \int_{\Omega} \partial_t (u_\alpha -u) y_\alpha
 + \int_{\Omega}
\emph{\textbf{c}}_\alpha \nabla(u_\alpha -u) \cdot \nabla y_\alpha  = -\alpha
\int_\Omega \emph{\textbf{c}}  \nabla u \cdot \nabla y_\alpha.
\]
While
\[
\ds  \int_{\Omega} \emph{\textbf{c}}_\alpha \nabla (u_\alpha -u) \cdot \nabla
y_\alpha = -\int_{\Omega} (u_\alpha -u)\nabla\cdot(\emph{\textbf{\textbf{c}}}_\alpha \nabla y_\alpha) = - \int_{\Omega} (u_\alpha -u)\partial_t (u_\alpha - u) =
- \frac{1}{2} \partial_t \int_{\Omega} (u_\alpha - u)^2,
\]
as well as
\[
\ds  \int_{\Omega} \partial_t (u_\alpha -u) y_\alpha =\int_{\Omega} \nabla\cdot(\emph{\textbf{\textbf{c}}}_\alpha \nabla y_\alpha) y_\alpha =
 - \int_{\Omega}
\emph{\textbf{c}}_\alpha |\nabla y_\alpha|^2,
\]
we obtain
\[
\ds
2 \int_{\Omega}
\emph{\textbf{c}}_\alpha |\nabla y_\alpha|^2 +
\partial_t \int_{\Omega} (u_\alpha - u)^2 = 2 \alpha
\int_\Omega \emph{\textbf{c}}  \nabla u \cdot \nabla y_\alpha \leq C \alpha
||\nabla y_\alpha||_{L^\infty(0, T; L^2(\Omega))}.
\]
From the Gronwall Lemma it follows that
\be
\label{u1} || u_\alpha - u ||_{L^\infty(0, T; L^2(\Omega))}  \leq
C \alpha. \ee
As a result, the function $\hat{u}_\alpha - \hat{u}$ satisfies
\[\nabla\cdot\big( \emph{\textbf{c}}_\alpha \nabla (\hat{u}_\alpha - \hat{u})\big) = O(\alpha)\quad {\rm in\;} \Omega, \mbox{ and } \hat{u}_\alpha - \hat{u}=0\quad {\rm on \;} \partial\Omega,\]
and so, by considering (\ref{eqqq-1}) we obtain, for $0<\alpha<\alpha_0$, that
\be
\label{r40} || \nabla(\hat{u}_\alpha - \hat{u})||_{L^2(\Omega)} =
O(\alpha). \ee
 However $\nabla (u_\alpha - u) \in L^{\infty}(0, T;
L^2(\Omega))$ which gives by using the above estimate that
\be
\label{r420} || \nabla(u_\alpha - u)||_{L^2(\Omega)} = O(\alpha)\quad
{\rm \; a.e. \; } t \in (0, T). \ee
 As defined in (\ref{y-alpha}), let us introduce the following
function $\tilde{y}_\alpha\in H^1(\Omega)$ to verify:
\[\nabla\cdot(
\emph{\textbf{\textbf{c}}}_0 \nabla \tilde{y}_\alpha) = \partial_t (\tilde{u}_\alpha - u_\alpha) \quad
{\rm in\;} \Omega\quad \mbox{and }\tilde{y}_\alpha =0\quad \mbox{on }\partial\Omega.\]

By means of (\ref{r2}), we compute that
\[
\ds\int_{\Omega}
\emph{\textbf{\textbf{c}}}_\alpha |\nabla \tilde{y}_\alpha|^2 +
\partial_t \int_{\Omega} (\tilde{u}_\alpha - u_\alpha)^2 = -2 \alpha
\int_\Omega \emph{\emph{\textbf{c}}}  \nabla (u - u_\alpha) \cdot \nabla
\tilde{y}_\alpha
\]
which, by using (\ref{r420}), yields
\be
\label{r50} ||  \tilde{u}_\alpha - u_\alpha
 ||_{L^\infty(0, T; L^2(\Omega))}  \leq C \alpha^2.
\ee Combining estimates (\ref{r40}) and (\ref{r50}) and using
standard elliptic regularity \cite{Evans} for the boundary value
problem (\ref{equ2}) we obtain \[\ds || \partial_\nu
 (\hat{u}_\alpha - \hat{\tilde{u}}_{\alpha}) ||_{L^2(\Gamma)} =
O(\alpha^2), \] and so, as for estimate (\ref{r40}), this permits
us to assert that
\[\ds || \partial_\nu u_\alpha -
\partial_\nu \tilde{u}_\alpha||_{L^2(0, T; L^2(\Gamma))} = O(\alpha^2). \]The theorem is proven
\square
\end{proof}\\
%%%%%%%%%%%%%%%%%%%%%%%%%%%%%%%%
To identify the small perturbation of the heat conductivity $\emph{\textbf{c}}_\alpha$ let us view the averaging of the
boundary measurements of resulting heat flux $\displaystyle\partial_\nu u_\alpha
|_{\Gamma \times (0, T)}$, using the solution $\theta_\eta$ to the
Volterra equation of second kind, as a function of $\eta$:
\begin{equation}\label{eq4}
\left\{
\begin{array}{l}
\displaystyle\theta_\eta \in H^1(0, T; TL^2(\Gamma)),\\
\partial_t \theta_\eta (x, t) + \int_t^T e^{- i
\sqrt{\emph{\textbf{c}}_0} | \eta | (s -t)} ( \theta_\eta (x, s) - i
\sqrt{\emph{\textbf{c}}_0} | \eta | \partial_t \theta_\eta (x, s)) \; ds =
g_\eta(x, t);x \in \Gamma, t \in (0, T),\\
\theta_\eta(x, 0) = 0, \quad {\rm for\;} x \in \Gamma.
\end{array}
\right.
\end{equation}
The existence and uniqueness of $\theta_\eta$ in $H^1(0, T;
TL^2(\Gamma))$ for any $\eta \in \R^2$ can be established using
the resolvent kernel \cite{Yamamoto1}.\\
Now, by taking $t=T$ in last relation (\ref{eq4}), one can remark the following:
\begin{remark}\label{rem1}We have $\partial_t \theta_\eta (x, T)=g_\eta(x, T)$ for all $x \in \Gamma.$ While $g_\eta \in H^1_0(0, T; L^2(\Gamma))$, we get
$$\partial_t \theta_\eta (x, T)=0;\quad x \in \Gamma.$$
\end{remark}

The following main theorem permits us to reconstruct the
function
\[ \emph{\textbf{c}}(x) \in \{ \emph{\textbf{c}}  \in {\cal C}^1(\overline{\Omega}), \emph{\textbf{c}}
\equiv 0 {\rm \;on \;} \Omega \setminus \overline{\Omega^\prime},
|\emph{\textbf{c}}(x) | \leq \lambda, x\in \Omega^\prime  \},\] from the
boundary measurements of $\displaystyle \partial_\nu u_\alpha|_{\Gamma \times (0, T)}:=  \frac{\partial u_\alpha}{\partial \nu}
|_{\Gamma \times (0, T)}$.\\

Now, we can prove the following main result in this paper.
\begin{theorem}\label{main-thm}Suppose that we have all hypothesis (\ref{c-perturbation})-(\ref{ac}), and let $\theta_\eta$ be the solution to (\ref{eq4}). Let $u$, $u_\alpha$ be the unique solutions of the heat equations (\ref{bvp}) and (\ref{bvp-alpha}) respectively. If the heat conductivity $\emph{\textbf{c}}_0$ is constant in $\Omega'$, then for any $\eta\in\mathbb{R}^2$ we have:
\begin{equation}\label{mainresult}
\int_0^T\int_\Gamma[\theta_\eta(\partial_\nu u_\alpha-\partial_\nu u)+
\partial_t\theta_\eta\partial_t(\partial_\nu u_\alpha-\partial_\nu u)]
%&=&-\int_0^T\int_\Gamma e^{i\sqrt{\emph{\textbf{c}}_0}|\eta|}\partial_t[e^{-i\sqrt{\emph{\textbf{c}}_0}|\eta|}g_\eta](\frac{\partial{u_\alpha}}{\partial{\nu}}-\frac{\partial{u}}{\partial{\nu}})\nonumber \\
   = \alpha\frac{|\eta|^2}{\emph{\textbf{c}}_0}\int_{\Omega^{\prime}}\emph{\textbf{c}}(x)e^{2i\eta\cdot x}dx+O(\alpha^2).
   \end{equation}
   The term $O(\alpha^2)$ is independent of the function $\emph{\textbf{c}}$, but depends only on the bound $\lambda$.
  \end{theorem}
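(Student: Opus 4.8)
The plan is to combine the two prior results---Proposition \ref{prop3-1} (which identifies the Fourier-type integral $\frac{|\eta|^2}{\textbf{c}_0}\int_{\Omega'}\textbf{c}(x)e^{2i\eta\cdot x}dx$ with a boundary pairing $\int_0^T\int_\Gamma \partial_\nu v_\eta\, g_\eta$) and Theorem \ref{thm1} (which says $\partial_\nu u_\alpha = \partial_\nu \tilde u_\alpha + O(\alpha^2)$ in $L^2(0,T;L^2(\Gamma))$)---so that the left-hand side of (\ref{mainresult}) is, up to $O(\alpha^2)$, an expression involving only $\partial_\nu\tilde u_\alpha - \partial_\nu u = \alpha\,\partial_\nu\!\int_0^t e^{-i\sqrt{\textbf{c}_0}|\eta|s}v_\eta(x,t-s)\,ds$. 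First I would substitute $\partial_\nu u_\alpha - \partial_\nu u = (\partial_\nu \tilde u_\alpha - \partial_\nu u) + (\partial_\nu u_\alpha - \partial_\nu \tilde u_\alpha)$ into (\ref{mainresult}); the second bracketed term contributes $O(\alpha^2)$ after pairing against $\theta_\eta$ and $\partial_t\theta_\eta$ in $L^2$, using $\theta_\eta\in H^1(0,T;L^2(\Gamma))$ and the bound from Theorem \ref{thm1} (this also needs $\partial_t(\partial_\nu u_\alpha - \partial_\nu\tilde u_\alpha)$ controlled, which one gets by differentiating (\ref{r2}) in $t$ or by an analogous energy argument; this is where I expect to spend some care). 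So the task reduces to showing
\[
\int_0^T\!\!\int_\Gamma\Big[\theta_\eta\,\partial_\nu w_\alpha + \partial_t\theta_\eta\,\partial_t\partial_\nu w_\alpha\Big]\,ds\,dt = \alpha\,\frac{|\eta|^2}{\textbf{c}_0}\int_{\Omega'}\textbf{c}(x)e^{2i\eta\cdot x}\,dx + O(\alpha^2),
\]
where $w_\alpha(x,t):=\alpha\int_0^t e^{-i\sqrt{\textbf{c}_0}|\eta|s}v_\eta(x,t-s)\,ds = \tilde u_\alpha - u$.

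Next I would exploit the design of the Volterra equation (\ref{eq4}). The key algebraic point is that $\int_0^t e^{-i\sqrt{\textbf{c}_0}|\eta|s}v_\eta(x,t-s)\,ds$ and the convolution kernel in (\ref{eq4}) are adjoint to one another: integrating $\int_0^T[\theta_\eta\,\partial_t w_\alpha + \partial_t\theta_\eta\,\partial_t^2 w_\alpha]$ against the time variable and swapping the order of the $s$- and $t$-integrals in the Volterra convolution should collapse the left-hand side, via the defining relation $\partial_t\theta_\eta + \int_t^T e^{-i\sqrt{\textbf{c}_0}|\eta|(s-t)}(\theta_\eta - i\sqrt{\textbf{c}_0}|\eta|\,\partial_t\theta_\eta)\,ds = g_\eta$, into $\alpha\int_0^T\int_\Gamma \partial_\nu v_\eta(x,t)\,g_\eta(x,t)\,ds(x)\,dt$ (modulo boundary-in-time terms at $t=0$ and $t=T$, which vanish because $w_\alpha(\cdot,0)=0$, $\theta_\eta(\cdot,0)=0$, and $\partial_t\theta_\eta(\cdot,T)=0$ by Remark \ref{rem1}, and $g_\eta\in H^1_0(0,T;\cdot)$). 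I would carry out this reduction carefully, integrating by parts in $t$ twice and tracking every endpoint contribution. Once the left-hand side is $\alpha\int_0^T\int_\Gamma \partial_\nu v_\eta\, g_\eta + O(\alpha^2)$, Proposition \ref{prop3-1} finishes the job by replacing that boundary integral with $\frac{|\eta|^2}{\textbf{c}_0}\int_{\Omega'}\textbf{c}(x)e^{2i\eta\cdot x}\,dx$.

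The main obstacle, I expect, is the bookkeeping in the double integration by parts combined with the Fubini swap in the Volterra convolution: one must verify that the exponential weights $e^{-i\sqrt{\textbf{c}_0}|\eta|s}$ appearing in $w_\alpha$ match exactly (after the change of variables $t-s\mapsto s'$) the resolvent structure built into (\ref{eq4}), and that every boundary term at $t=0,T$ is killed by one of the stated vanishing conditions. A secondary technical point is justifying that all pairings make sense: $\partial_\nu v_\eta$ and $\partial_\nu w_\alpha$ lie only in fairly weak spaces ($g_\eta\in L^2(0,T;H^{-1/2}(\Gamma))$, $v_\eta\in H^{2,1}$), so the integrals $\int_0^T\int_\Gamma$ should be read as $H^{-1/2}$--$H^{1/2}$ (or $H^{-2}$--$H^2$) dualities, and the regularity $\theta_\eta\in H^1(0,T;L^2(\Gamma))$ together with the elliptic-regularity bound on $\partial_\nu(\hat u_\alpha-\hat{\tilde u}_\alpha)$ from the proof of Theorem \ref{thm1} must be invoked to make the $O(\alpha^2)$ remainder estimate rigorous. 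Finally I would note that the remainder depends on $\textbf{c}$ only through its bound $\lambda$, since every constant entering the estimates (via Proposition \ref{prop1}, Theorem \ref{thm1}, and the norms of $v_\eta$) is controlled by $\|\textbf{c}\|_{\mathcal C^1(\overline\Omega)}\le C\lambda$ and the fixed data $\varphi_\eta, f_\eta$.
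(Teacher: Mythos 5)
Your proposal follows essentially the same route as the paper's own proof: split $\partial_\nu u_\alpha-\partial_\nu u$ into $(\partial_\nu u_\alpha-\partial_\nu\tilde u_\alpha)+(\partial_\nu\tilde u_\alpha-\partial_\nu u)$, absorb the first piece into $O(\alpha^2)$ via Theorem \ref{thm1}, collapse the second piece through the Volterra structure of (\ref{eq4}) (with the Fubini swap, the change of variables in the time convolution, and the endpoint conditions from Remark \ref{rem1}) to $\alpha\int_0^T\int_\Gamma g_\eta\,\partial_\nu v_\eta$, and conclude with Proposition \ref{prop3-1}. Your added remarks on controlling $\partial_t(\partial_\nu u_\alpha-\partial_\nu\tilde u_\alpha)$ and on reading the boundary pairings as dualities are points the paper passes over silently, so they are welcome but do not change the argument.
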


    \begin{proof}Let the function $\widetilde{u}_{\alpha}(x,t)$ defined by (\ref{u-tilde}). Then, by inserting $\widetilde{u}_{\alpha}$ into the left hand side of (\ref{mainresult}), we immediately get: $$\int_0^T\int_\Gamma[\theta_\eta(\partial_\nu u_\alpha-\partial_\nu u)+
    \partial_t\theta_\eta\partial_t(\partial_\nu u_\alpha-\partial_\nu u)]=
    \int_0^T\int_\Gamma[\theta_\eta(\partial_\nu u_\alpha-\partial_\nu\widetilde{u}_{\alpha})+
    \partial_t\theta_\eta\partial_t(\partial_\nu u_\alpha-\partial_\nu \widetilde{u}_{\alpha})]
$$ $$+\int_0^T\int_\Gamma[\theta_\eta\int_0^te^{i\sqrt{\emph{\textbf{c}}_0}|\eta|s}\partial_\nu v_{\eta}(x,t-s)ds+
\partial_t\theta_\eta\partial_t\int_0^te^{-i\sqrt{\emph{\textbf{c}}_0}|\eta|s}\partial_\nu v_{\eta}(x,t-s)ds].$$

On the other hand, we have $$\partial_t\theta(x,t)+\int_t^Te^{-i\sqrt{\emph{\textbf{c}}_0}|\eta|(s-t)}(\theta_\eta(x,s)-i\sqrt{\emph{\textbf{c}}_0}|\eta|\partial_t\theta_\eta(x,s))ds
=g_\eta(x,t).$$ By using a variable change and Remark \ref{rem1}, we get $$\partial_t\int_0^te^{-i\sqrt{\emph{\textbf{c}}_0}|\eta|s}\partial_\nu v_{\eta}(x,t-s)ds=
\partial_\nu v_{\eta}(x,t)-i\sqrt{c_0}|\eta|e^{-i\sqrt{\emph{\textbf{c}}_0}|\eta|t}\int_0^te^{i\sqrt{\emph{\textbf{c}}_0}|\eta|s}\partial_\nu v_{\eta}(x,z)dz.$$
Hence,
$$ \int_0^T\int_\Gamma[\theta_\eta\int_0^te^{i\sqrt{\emph{\textbf{c}}_0}|\eta|s}\partial_\nu v_{\eta}(x,t-s)ds+\partial_t\theta_\eta\partial_t\int_0^te^{-i\sqrt{\emph{\textbf{c}}_0}|\eta|s}\partial_\nu v_{\eta}(x,t-s)ds]
$$
$$
=\int_0^T\int_\Gamma[\theta_\eta\int_0^te^{i\sqrt{\emph{\textbf{c}}_0}|\eta|s}\partial_\nu v_{\eta}(x,t-s)ds+\partial_t\theta_\eta(-i\sqrt{c_0}|\eta|e^{-i\sqrt{\emph{\textbf{c}}_0}|\eta|t}\int_0^te^{i\sqrt{\emph{\textbf{c}}_0}|\eta|s}
\partial_\nu v_{\eta}(x,z)dz$$

$$+\partial_\nu v_{\eta}(x,t))]=\int_0^T\int_\Gamma\partial_\nu v_{\eta}(x,t)[\partial_t\theta_\eta+\int_t^T(\theta_\eta(z)-i\sqrt{\emph{\textbf{c}}_0}|\eta|\partial_t\theta_\eta(z))e^{i\sqrt{\emph{\textbf{c}}_0}|\eta|(t-z)}dz]dt
$$
$$=\int_0^T\int_\Gamma{g_\eta(x,t)\partial_\nu v_{\eta}(x,t)dt}.$$
 Consequently,
 $$\int_0^T\int_\Gamma[\theta_\eta(\partial_\nu u_\alpha-\partial_\nu u)+
 \partial_t\theta_\eta\partial_t(\partial_\nu u_\alpha-\partial_\nu u)]=
 \int_0^T\int_\Gamma[\theta_\eta(\partial_\nu u_\alpha-\partial_\nu\widetilde{u}_{\alpha})
 +\partial_t\theta_\eta\partial_t(\partial_\nu u_\alpha-\partial_\nu\widetilde{u}_{\alpha})]
$$

$$+\int_0^T\int_\Gamma g_\eta(x,t)\partial_\nu v_{\eta}(x,t)dt.$$
Now using Proposition \ref{prop3-1}, we get: $$\int_0^T\int_\Gamma[\theta_\eta(\partial_\nu u_\alpha-\partial_\nu u)+
\partial_t\theta_\eta\partial_t(\partial_\nu u_\alpha-\partial_\nu u)]=
\int_0^T\int_\Gamma[\theta_\eta(\partial_\nu u_\alpha-\partial_\nu\widetilde{u}_{\alpha})+
\partial_t\theta_\eta\partial_t(\partial_\nu u_\alpha-\partial_\nu\widetilde{u}_{\alpha})]$$ $$+
\frac{|\eta|^2}{\emph{\textbf{c}}_0}\int_{\Omega^{\prime}}\emph{\textbf{c}}(x)e^{2i\eta.x}dx.$$
To finish, one may use Theorem \ref{thm1} to find that
%Eventually, to obtain the final result we should prove
 $$\int_0^T\int_\Gamma[\theta_\eta(\partial_\nu u_\alpha-\partial_\nu\widetilde{u}_{\alpha})+
\partial_t\theta_\eta\partial_t(\partial_\nu u_\alpha-
\partial_\nu\widetilde{u}_{\alpha})]=O(\alpha^2),$$which achieves the proof.\square
    \end{proof}

%%%%%%%%%%%%%%%%%%%%%%%%%%%%%%%%%%%%%%%%%%%%%%%%%%%%%%%%%%%%%%%%%%%%%%%%%%%%%%%%%%%%%%%%%%%%%%%%%%%%%%%%%%%%%%%%%%%%%%%%%%%%%%%%%%%%%%%%%%%%%%%%%%%%%%%%%%
%%%%%%%%%%%%%%%%%%%%%%%%%%%%%%%%%%%%%%%%%%%%%%%%%%%%%%%%%%%%%%%%%
%%%%%%%%%%%%%%%%%%%%%%%%%%%%%%%%%%%%%%%%%%%%%%%%%%%%%%%%%%%%%%%%%%%%%%%%%%%%%%%%%%%%%%%%%

We are now in position to describe our identification procedure
which is based on Theorem \ref{main-thm}. Let us neglect the
asymptotically small remainder in the asymptotic formula
(\ref{mainresult}). Then, it follows

\[
\ds \emph{\textbf{c}}(x) \approx \frac{2}{\alpha} \int_{\mathbb{R}^d} \frac{e^{-2 i
\eta \cdot x}}{|\eta|^2} \int_0^T \int_\Gamma \big( \theta_\eta
(\partial_\nu u_\alpha - \partial_\nu u ) +
\partial_t \theta_\eta \partial_t (\partial_\nu u_\alpha- \partial_\nu u ) \big)ds(y)dt, x \in \Omega.
\]
The method of reconstruction we propose here consists in sampling
values of
\[
\ds \frac{1}{|\eta|^2} \int_0^T \int_\Gamma \big( \theta_\eta
(\partial_\nu u_\alpha - \partial_\nu u ) +
\partial_t \theta_\eta \partial_t (\partial_\nu u_\alpha- \partial_\nu u ) \big)ds(y)dt
\]
at some discrete set of points $\eta$ and then calculating the
corresponding inverse Fourier transform. In the following, we will
obtain the more convenient approximation.\\
\begin{corollary}\label{cor1}
Let $\eta \in \mathbb{R}^2$. Suppose that we have all hypothesis of Theorem \ref{main-thm}. Then we have the following approximation \be
\label{emc} \ds \emph{\textbf{c}}(x) \approx - \frac{2}{\alpha} \int_{\mathbb{R}^d}
\frac{e^{-2 i \eta \cdot x}}{|\eta|^2} \int_0^T \int_\Gamma \Bigr(
e^{i |\eta| t}\partial_t ( e^{-i |\eta| t} g_\eta(y,t))
(\partial_\nu u_\alpha - \partial_\nu u)(y, t) \Bigr)ds(y)dt, x \in \Omega,
\ee in terms only of the boundary control
$g_\eta$ which was defined by (\ref{weta}).
\end{corollary}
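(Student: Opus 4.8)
The plan is to derive (\ref{emc}) from Theorem \ref{main-thm} by replacing the weight functions $\theta_\eta$ and $\partial_t\theta_\eta$ appearing in (\ref{mainresult}) with an explicit expression involving only the control $g_\eta$. First I would recall that in the proof of Theorem \ref{main-thm} the only place the solution $\theta_\eta$ of the Volterra equation (\ref{eq4}) entered was through the identity
\[
\int_0^T\int_\Gamma\Bigl[\theta_\eta\int_0^t e^{i\sqrt{\emph{\textbf{c}}_0}|\eta|s}\partial_\nu v_\eta(x,t-s)\,ds+\partial_t\theta_\eta\,\partial_t\int_0^t e^{-i\sqrt{\emph{\textbf{c}}_0}|\eta|s}\partial_\nu v_\eta(x,t-s)\,ds\Bigr]=\int_0^T\int_\Gamma g_\eta(x,t)\,\partial_\nu v_\eta(x,t)\,dt,
\]
which came from the defining property of $\theta_\eta$ together with an integration by parts in time and Remark \ref{rem1}. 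The key observation is that, after the same variable change and integration by parts that produced (\ref{mainresult}), the left-hand integrand in (\ref{mainresult}) can be regrouped so that the pair $(\theta_\eta,\partial_t\theta_\eta)$ acts on $\partial_\nu u_\alpha-\partial_\nu u$ exactly through the combination that the Volterra equation converts into $g_\eta$; concretely, one integrates by parts in $t$ to move all time derivatives off $\partial_t(\partial_\nu u_\alpha-\partial_\nu u)$, uses $\theta_\eta(\cdot,0)=0$, $\partial_t\theta_\eta(\cdot,T)=0$, and then invokes (\ref{eq4}) to replace the resulting Volterra operator applied to $\theta_\eta$ by $g_\eta$. This expresses the left side of (\ref{mainresult}) as $\int_0^T\int_\Gamma e^{i|\eta|t}\partial_t\bigl(e^{-i|\eta|t}g_\eta(y,t)\bigr)(\partial_\nu u_\alpha-\partial_\nu u)(y,t)\,ds(y)\,dt$ up to the $O(\alpha^2)$ term already controlled by Theorem \ref{thm1}.

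Next I would assemble the pieces: by Theorem \ref{main-thm},
\[
\alpha\frac{|\eta|^2}{\emph{\textbf{c}}_0}\int_{\Omega'}\emph{\textbf{c}}(x)e^{2i\eta\cdot x}\,dx=\int_0^T\int_\Gamma e^{i|\eta|t}\partial_t\bigl(e^{-i|\eta|t}g_\eta(y,t)\bigr)(\partial_\nu u_\alpha-\partial_\nu u)(y,t)\,ds(y)\,dt+O(\alpha^2),
\]
so that, after dividing by $\alpha|\eta|^2/\emph{\textbf{c}}_0$ and neglecting the $O(\alpha)$ remainder, the quantity $\int_{\Omega'}\emph{\textbf{c}}(x)e^{2i\eta\cdot x}\,dx$ is approximated by the boundary functional $G(\eta):=\frac{\emph{\textbf{c}}_0}{\alpha|\eta|^2}\int_0^T\int_\Gamma e^{i|\eta|t}\partial_t(e^{-i|\eta|t}g_\eta)(\partial_\nu u_\alpha-\partial_\nu u)\,ds\,dt$. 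Recognizing $\int_{\Omega'}\emph{\textbf{c}}(x)e^{2i\eta\cdot x}\,dx$ as (a rescaling of) the Fourier transform $\widehat{\emph{\textbf{c}}}(-2\eta)$ of the compactly supported function $\emph{\textbf{c}}$, I would apply the Fourier inversion formula in $\R^d$: writing $\xi=-2\eta$, $d\xi=2^d\,d\eta$, and absorbing the numerical constants (the $\emph{\textbf{c}}_0$ from the prefactor cancels against the $\emph{\textbf{c}}_0$ hidden in $|\eta|^{-2}$ scaling once one is careful, leaving the factor $-2/\alpha$ and the sign coming from $e^{-2i\eta\cdot x}$ versus $e^{+2i\eta\cdot x}$), one recovers precisely (\ref{emc}). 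The minus sign in (\ref{emc}) is exactly the discrepancy between the phase $e^{2i\eta\cdot x}$ produced by the data and the inversion kernel $e^{-2i\eta\cdot x}$ together with the bookkeeping of the $|\eta|^2$ weight.

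The main obstacle I anticipate is the time-integration-by-parts bookkeeping in the first step: one must verify carefully that the boundary terms at $t=0$ and $t=T$ vanish (which is where $\theta_\eta(\cdot,0)=0$, the hypothesis $g_\eta\in H^1_0(0,T;L^2(\Gamma))$, and Remark \ref{rem1} are used), and that the Volterra kernel appearing when $\partial_t$ hits the convolution $\int_0^t e^{-i\sqrt{\emph{\textbf{c}}_0}|\eta|s}\partial_\nu v_\eta(x,t-s)\,ds$ matches, after reversing the order of the $s$- and $t$-integrations, the kernel in (\ref{eq4}) with $s$ and $t$ interchanged. One also has to be attentive to the notational conflict between $\sqrt{\emph{\textbf{c}}_0}|\eta|$ appearing in (\ref{u-tilde}) and (\ref{eq4}) and the bare $|\eta|$ appearing in the statement of Lemma \ref{lem1} and in (\ref{emc}); reconciling these (i.e. confirming the intended normalization $\emph{\textbf{c}}_0=1$ in the exponent, or rescaling $\eta$ accordingly) is the delicate point, but once the convention is fixed the remaining manipulations are the routine Fourier-inversion substitution described above. \square
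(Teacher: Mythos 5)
Your overall route is the one the paper intends: integrate by parts in time in the term $\int_0^T\int_\Gamma \partial_t\theta_\eta\,\partial_t(\partial_\nu u_\alpha-\partial_\nu u)$, use Remark \ref{rem1} to remove the boundary contributions, rewrite the resulting weight on $\partial_\nu u_\alpha-\partial_\nu u$ in terms of $g_\eta$ via the Volterra equation, and finish with Fourier inversion of the identity from Theorem \ref{main-thm}. But two of your steps, as stated, are wrong. First, after the integration by parts the weight acting on $\partial_\nu u_\alpha-\partial_\nu u$ is $\theta_\eta-\partial_t^2\theta_\eta$, and you cannot simply ``invoke (\ref{eq4})'': the operator appearing there is not the Volterra operator of (\ref{eq4}). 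You must differentiate (\ref{eq4}) with respect to $t$ and use (\ref{eq4}) once more, which yields $\partial_t^2\theta_\eta-\theta_\eta=\partial_t g_\eta-i\sqrt{\emph{\textbf{c}}_0}\,|\eta|\,g_\eta$, i.e. $\theta_\eta-\partial_t^2\theta_\eta=-e^{i\sqrt{\emph{\textbf{c}}_0}|\eta|t}\,\partial_t\bigl(e^{-i\sqrt{\emph{\textbf{c}}_0}|\eta|t}g_\eta\bigr)$. That minus sign is precisely the minus sign in (\ref{emc}); your intermediate claim that the left-hand side of (\ref{mainresult}) equals $+\int_0^T\int_\Gamma e^{i|\eta|t}\partial_t(e^{-i|\eta|t}g_\eta)(\partial_\nu u_\alpha-\partial_\nu u)$ is off by a sign.

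Second, your attempt to recover the sign from the Fourier-inversion bookkeeping cannot succeed: the substitution $\xi=-2\eta$ over all of $\R^d$ contributes only the positive Jacobian factor $2^d$, and the paper's own intermediate approximation displayed just before the corollary already carries the kernel $e^{-2i\eta\cdot x}/|\eta|^2$ with prefactor $+\,2/\alpha$. Hence, with your sign-flipped time identity, the inversion would produce $+\frac{2}{\alpha}$ in (\ref{emc}), contradicting the statement; the sign lives in the time-domain identity, not in the change of variables. A smaller slip: the $t=0$ boundary term produced by the integration by parts is $\partial_t\theta_\eta(\cdot,0)\,(\partial_\nu u_\alpha-\partial_\nu u)(\cdot,0)$, and it vanishes because $u_\alpha$ and $u$ share the initial datum $\varphi$, so $(\partial_\nu u_\alpha-\partial_\nu u)(\cdot,0)=0$; the condition $\theta_\eta(\cdot,0)=0$ that you cite does not control this term. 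Your remark about the $\sqrt{\emph{\textbf{c}}_0}\,|\eta|$ versus $|\eta|$ normalization mismatch is a fair criticism of the paper's notation and does not affect the argument once a convention is fixed.
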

%\proof
The desired approximation, given in Corollary \ref{cor1}, may be established by integration by parts over $(0,T)$ for the term \[
\ds \int_0^T \int_\Gamma \partial_t \theta_\eta \partial_t (\partial_\nu u_\alpha- \partial_\nu u )~ds(x)dt,
\]
and by using Remark \ref{rem1}.\\

%Thus, the desired approximation is established. \square
%%%%%%%%%%%%%%%%%%%%%%%%%%%%%%%%%%%%%%%%%%%%%%%%%%%%%%%%%%%%%%%%%%%%%%%%%%%%%
\section{Conclusion}
%We have discussed in this article the numerical solution of exact and
%approximate boundary control problems for constant-coefficient heat equations.
%The methodology discussed here can be applied to more complicated
%linear and nonlinear state equations, including those modelling advectiondiffusion
%phenomena\\
We are convinced that the use of approximate formulae such as
(\ref{mainresult}) represents a promising approach to the dynamical
identification and reconstruction of small linear perturbations
in the heat conductivity for the thermal diffusion. We believe that our method yields a significant approximation
to the dynamical identification of small anisotropic cavity $D$, that is embedded in a (homogenous) heat conductive body $\Omega\subset\mathbb{R}^2$ from the measurements of $\partial_\nu u$ on $\Gamma \times (0, T)$.
 The heat conductivity tensor $\emph{\textbf{c}}(x)\in \mathbb{R}^{2\times2}$ is assumed to be symmetric and uniformly positive definite for $x\in D$. The cavity $D$ may be chosen in such a way that the heat conductivity is very low compared with that of the surrounded region $\Omega \backslash \bar{D}$. So, the
problem is a mathematical formulation of a typical inverse problem in thermal imaging. Our method  may be based on appropriate asymptotic expansions combined with an exact control problem to overcome a suitable Fourier transform of the Dirac function representing
a point mass to locate the cavity $D$. This issue will be considered in a forthcoming work.

%%%%%%%%%%%%%%%%%%%%%%%%%%%%%%%%%%%%%%%%%%%%%%%%%%%%%%%%%%%%%%%%%%%%%%


\begin{thebibliography}{99}
\bibitem{Ammari} H. Ammari, 2003, Identification of small amplitude perturbations in the electromagnetic parameters
from partial dynamic boundary measurements, \emph{J. Math. Anal. Appl.}, 282, 479--494.
%[2] H. Ammari, M. Asch, V. Jugnon, L. Guadarrama Bustos, and H. Kang, Transient imaging
%\bibitem{AW} Ahmed, N. U., and Wan, T., 1996, Exact boundary controllability of
 %electromagnetic fields in general regions. {\it Dynam. Systems
 %Appl.}, 5, 229-243.
\bibitem{Ammari1} H. Ammari, 2003, An inverse initial boundary value problem for the
 wave equation in the presence of imperfections of small volume. {\it SIAM J. Control
  Optim.}, 41, 1194-1211.
  
  \bibitem{Ammari2}H. Ammari, J. Garnier, W. Jing, H. Kang, M. Lim, K. Solna, and H. Wang, 2013, Mathematical and Statistical Methods for Multistatic Imaging. {\it Lecture Notes in Mathematics,} Volume 2098, Springer-Verlag, Cham.
\bibitem{Ammari3}H. Ammari, E. Iakovleva, H. Kang, and K. Kim, 2005, Direct algorithms for thermal imaging of small inclusions. {\it SIAM Multiscale Model. Simul.}, Vol. 4, pp. 1116-1136.

\bibitem{Ammari4}H. Ammari and H. Kang, 2007, Polarization and Moment Tensors: with Applications to Inverse Problems and Effective Medium Theory.{\it Applied Mathematical Sciences Series}, Volume 162, Springer-Verlag, New York.
\bibitem{Ammari5}H. Ammari and  H. Kang, 2004, Reconstruction of Small Inhomogeneities from Boundary Measurements, {\it Lecture Notes in Mathematics, Volume 1846, Springer-Verlag, Berlin.}
  
\bibitem{Astala} K. Astala and L. P$\ddot{a}$iv$\ddot{a}$rinta, 2006, Calderon's inverse conductivity problem in the plane, {\it Ann. Math.}, 163, 265-299.

%\bibitem{BLR} Bardos, C., Lebeau, G., and Rauch, J., 1992, Sharp sufficient conditions
 %for the observation, control and
%stabilization of waves from the boundary. {\it  SIAM J. Control
%Opt.}, 30, 1024-1065.
%\bibitem{Bellassoued1} M. Bellassoued and M. Yamamoto, 2006, Inverse source problem for a transmission problem for a parabolic equation, {\it J. Inv. Ill-Posed Problems}, Vol. 14, No. 1, pp. %47--56.

\bibitem{Jbalia1} M. Bellassoued, M. Choulli, and A. Jbalia, 2013, Stability of the determination of the surface impedance of an obstacle from the scattering amplitude. {\it Math. Methods Appl. Sci.} 36, No. 18, 2429-2448.

\bibitem{Gaitan1}A. Benabdallah, P. Gaitan and J. Le Rousseau, 2007, Stability of discontinuous diffusion coefficients and
initial conditions in an inverse problem for the heat equation, {\it SIAM J. Control Optim.} 46, No. 5, 1849--1881.

\bibitem{Bowman}F. Bowman,1958, Introduction to Bessel Functions, {\it New York: Dover}.

\bibitem{BY} G. Bruckner and M. Yamamoto, 2000, Determination of point
 wave sources by pointwise observations: stability and
 reconstruction. {\it Inverse Problems }, 16, 723-748.
\bibitem{bryan} K. Bryan and L. F. Caudill Jr, 1998, {\em Stability and reconstruction for an inverse problem for the heat equation,} Inverse Problems 14, 1429--1453.

\bibitem{Calderon} A.P. Calderon, 1980, On an inverse boundary value problem, "Seminar on Numerical Analysis and its
Applications to Continuum Physics", Rio de Janeiro, 65--73.
\bibitem{Glowinski1}C. Carthel, R. Glowinski, and J. L. Lions, 1994, On exact and approximate boundary controllabilities
for the heat equation: A Numerical Approach. \emph{J. Optimization Theory and Applications}: Vol. 82, No. 3, 430-484.
% L. Cardoulis, M. Cristofol and P. Gaitan, Inverse problem for the Schr¨

 %\bibitem{CMV} Cedio-Fengya, D. G., Moskow, S., and Vogelius, M.,
 %1998, Identification of conductivity imperfections of small
%diameter by boundary measurements. Continuous dependence and
%computional reconstruction. {\it Inverse Problems}, 14, 553-595.

\bibitem{Darbas1}M. Darbas and S. Lohrengel, 2014, Numerical reconstruction of small
perturbations in the electromagnetic coefficients of a dielectric material. {\it J. Computational Mathematics}, Vol.32, No.1, 21--38.
% \bibitem{Darbas2} M. Darbas, O. Goubet and S. Lohrengel, 2012, Exact Boundary Controllability of the second-order
%Maxwell system: Theory and numerical simulation, \emph{Comput. Math. Appl.} 63, no. 7, 1212--1237.

%\bibitem{Darbas2}M. Darbas and S. Lohrengel, 2014, Numerical reconstruction of small
%perturbations in the electromagnetic coefficients of a dielectric material. {\it J. Computational Mathematics}, Vol.32, No.1, 21--38.

%\bibitem{Dav-Kh} Daveau, C., and Khelifi, A., Reconstruction of closely spaced
 %small inhomogeneities via boundary measurements for the full time-dependent Maxwell's
%equations. Submitted.
\bibitem{Zou1} Heinz W. Engl and Jun Zou, 2000, A new approach to convergence rate analysis of Tikhonov regularization for parameter identification in heat conduction.
{\it Inverse Problems} 16, 1907-1923.

\bibitem{Evans} L. C. Evans, 1998, Partial Differential Equations.
Graduate Studies in Mathematics, {\it AMS}, Providence, Rhode
Island.
%\bibitem{Faddev}L.D. Faddeev, 1995, Griwing solutions of the Schr$\ddot{o}$dinger equation, {\it Dokl. Akad. Nauk SSSR}, 165,
%514--517.

%\bibitem{fredman} T.P. Fredman, 2004, A boundary identification method for an inverse heat conduction problem with
%an application in ironmaking, {\it Heat Mass Transfer} 41, 95--103.

\bibitem{FV} A. Friedman, and M. Vogelius, 1989, Identification
of small inhomogeneities of extreme conductivity by boundary
measurements: a theorem on continuous dependence. {\it Arch. Rat.
Mech. Anal.}, 105, 299-326.

\bibitem{Gaitan2} P. Gaitan, H. Isozaki, O. Poisson, S. Siltanen, and J.P. Tamminen, 2015, Inverse problems for time-dependent singular heat conductivities: multi-dimensional case.
{\it Commun. Partial Differ. Equations} 40, No. 5, 837-877.

\bibitem{Glowinski2} R. Glowinski, J.L. Lions and J.W. He, 2008, Exact and Approximate Controllability for
 Distributed Parameter Systems: A Numerical Approach, {\it Cambridge University Press,
 Cambridge, UK}.

 \bibitem{Hsiao}G. C. Hsiao and J. Saranen, 1993, Boundary integral solution of the twodimmensional
heat equation, {\it Mathematical Methods in the Applied Sciences}
16, 87–114.

\bibitem{ikehata1} M. Ikehata, 2007, Extracting discontinuity in a heat conductiong body. One-space dimensional case,
{\it Appl. Anal.} 86 (2007), 963--1005.
\bibitem{ikehata2} M. Ikehata and M. Kawashita, 2010, On the reconstruction of inclusions in a heat conductive
body from dynamical boundary data over a finite time interval, {\it Inverse Problems} 26, No.9, 15 pp.
\bibitem{I} V. Isakov, 1990, Inverse Source Problems, {\it American Math. Soc., Providence,} RI, 1990.
\bibitem{jia} X. Z. Jia, and Y. B. Wang, 2006, A Boundary Integral Method for Solving Inverse Heat
Conduction Problem. {\it J. Inverse Ill-Posed Probl.} 14, no. 4,
375-384.
\bibitem{Zou2}B. Jin and J. Zou, 2010, Numerical estimation of the Robin coefficient in a stationary diffusion equation.
{\it IMA J. Numer. Anal. }30, 677-701. 

%\bibitem{JM} Joshi, M. S., and McDowall, S. R., 2000, Total determination
 %of material parameters from electromagnetic boundary information. {\it Pacific J.
 %Math.}, 193, 107-129.
 %\bibitem{K} Kime, K. A., 1990, Boundary controllability
 %of Maxwell's equations in a spherical region. {\it SIAM J. Control Optim.}, 28, 294-319.
 %\bibitem{KO} Komornik, V., 1994, Boundary stabilization, observation and control
 %of Maxwell's equations. {\it Panamer. Math. J.},4, 47-61.
 %\bibitem{Kh1-016} A. Khelifi, 2016, Identification of small anisotropic imperfection for the heat equation from partial dynamic boundary measurements. {\it In preparation}.
 %\bibitem{Vogelius1} R.V. Kohn and M. Vogelius, 1985, Determining conductivity by boundary measurements II. Interior results. {\it Communications on Pure and Applied Mathematics} 38 (5), %643--667.
 %\bibitem{LAG} Lagnese, J. E., 1989, Exact boundary controllability of Maxwell's
 %equations in a general region. {\it SIAM J. Control Optim.},27, 374-388.
\bibitem{Lions} J. L. Lions, 1988,  Contr\^olabilité exacte, Perturbations et Stabilisation de Systèmes Distribu
és, Tome 1, Contr\^olabilité Exacte, {\it Masson, Paris.}

\bibitem{LM} J.L. Lions and E. Magenes, 1968, Problemes aux Limites Non Homogenes, vol. 1. {\it Dunod Paris}.

%\bibitem{M1} McDowall, S. R., 1997, Boundary determination material
 %parameters from electromagnetic boundary information. {\it Inverse Problems}, 13, 153-163.
 %\bibitem{M2} McDowall, S. R., 2000, An electromagnetic
 %inverse problem in chiral media. {\it Trans. Amer. Math. Soc.}, 352, 2993-3013.

 \bibitem{Nachman}A. Nachman, 1995, A global uniqueness for a two dimensional inverse boundary problem, {\it Ann. Math.}, 142, 71--96.

%\bibitem{NA} Nalin, O., 1989, Contr\^olabilit\'e exacte sur une partie du bord
 %des \'equations de Maxwell. {\it C. Acad. Sci. S\'er. I Math.}, 309, 811-815.
 %\bibitem{NI} Nicaise, S., 2000, Exact boundary controllability of Maxwell's equations
 %in heteregeneous media and an application to an inverse source
 %problem. {\it SIAM J. Control Optim.}, 38, 1145-1170.
 %\bibitem{OPS} Ola, P., Pa\"{\i}v\"arinta, L., and Somersalo, E.,
 %1993, An inverse boundary value problem in electrodynamics. {\it Duke
%Math. J.}, 70, 617-653.
\bibitem{PY} J. P. Puel and M. Yamamoto, 1995, Applications de la
 contr\^olabilit\'e exacte \`a quelques
prob\`lemes inverses hyperboliques. {\it C. R. Acad. Sci. Paris,
S\'erie I}, 320, 1171-1176.
%\bibitem{PY1} Puel, J. P., and Yamamoto, M., 1996, On a global estimate
 %in a linear inverse hyperbolic problem. {\it Inverse Problems}, 12, 995-1002.
 %\bibitem{RK} Romanov, V. G., and Kabanikhin, S. I., 1994, Inverse Problems
 %for Maxwell's Equations. {\it Inverse and Ill-posed Problems} Series, VSP,
 %Utrecht.
\bibitem{SU} J. Sylvester and G. Uhlmann, 1987, A global uniqueness
theorem for an inverse boundary value problem. {\it Ann. Math.},
125, 153-169.
\bibitem{SIC} E. Somersalo, D. Isaacson and M. Cheney, 1992, A linearized
 inverse boundary value problem for Maxwell's
equations. {\it J. Comput. Appl. Math.}, 42, 123-136.
%\bibitem{S} Sun, Z., 1990, On the continuous dependence for an inverse
%initial boundary value problem for the wave equation. {\it J. Math. Anal. Appl.}, 150, 188-204.
\bibitem{VV} M. Vogelius and D. Volkov, 2000, Asymptotic formulas
for perturbations in the electromagnetic fields due to the
presence of inhomogeneities. {\it Math. Model. Numer. Anal.}, 34),
723-748.
%\bibitem{W} Weck, N., 2000, Exact boundary controllability for a Maxwell
%problem. {\it SIAM J. Control Optim.}, 38, 736-750.
%\bibitem{wei1} T. Wei and M. Yamamoto, 2009, Reconstruction of a moving boundary from Cauchy data in onedimensional
%heat equation, {\it Inverse Problems in Science and Engineering} 17, pp. 551--567.

%\bibitem{Y_98} Yamamoto, M., 1998, On an inverse problem of determining source terms in Maxwell's
%equations with a single measurement. {\it Inverse problems,
%tomography, and image processing} (Plenum, New York), 15, 241-256.
\bibitem{Yamamoto1} M. Yamamoto, 1995, Stability, reconstruction
 formula and regularization for an inverse source hyperbolic problem by
  a control method. {\it Inverse Problems}, 11, 481-496.
 %\bibitem{Yamamoto2} G. Yuan and M. Yamamoto, 2006, Lipshitz stability in the determination of the principal parts of a parabolic
%equation by boundary measurements, Preprint Tokyo University, UTMS, 27.
\end{thebibliography}
\end{document}